\newcommand{\removelatexerror}{\let\@latex@error\@gobble}
\theoremstyle{definition}
\newtheorem{definition}{Definition}
\newtheorem{theorem}{Theorem}
\newtheorem{lemma}{Lemma}
\theoremstyle{remark}
\newtheorem{remark}{Remark}
\theoremstyle{assumption}
\def\iter{i}
\def\maxiter{\iter_{\mathit{max}}}
\def\lsiter{\iter^\prime}
\def\maxlsiter{\maxiter^\prime}
\def\algtol{\mathit{tolerance}}
\def\infeasflag{\mathit{LineSearch}}
\def\old{\mathrm{old}}
\def\algfalse{\mathit{false}}
\def\algtrue{\mathit{true}}
\def\BibTeX{{\rm B\kern-.05em{\sc i\kern-.025em b}\kern-.08em
    T\kern-.1667em\lower.7ex\hbox{E}\kern-.125emX}}
\begin{document}
\title{Safe adaptive NMPC using ellipsoidal tubes}
\author{Johannes Buerger and Mark Cannon
\thanks{Johannes Buerger is with the BMW Group, Munich, Germany (e-mail:
johannes.buerger@bmw.de). }
\thanks{Mark Cannon is with the Department of Engineering Science, University of Oxford, U.K. (e-mail:
mark.cannon@eng.ox.ac.uk).}}

\maketitle

\begin{abstract}
A computationally efficient nonlinear Model Predictive Control (NMPC) algorithm is proposed for safe learning-based control with a system model represented by an incompletely known affine combination of basis functions and subject to additive set-bounded disturbances. The proposed algorithm employs successive linearization around predicted trajectories and accounts for the uncertain components of future states due to linearization, modelling errors and disturbances using ellipsoidal sets centered on the predicted nominal state trajectory. An ellipsoidal tube-based approach ensures satisfaction of constraints on control variables and model states. Feasibility is ensured using local bounds on linearization errors and a procedure based on a backtracking line search. We combine the approach with a set membership parameter estimation strategy in numerical simulations. We show that the ellipsoidal embedding of the predicted uncertainty scales favourably with the problem size. The resulting algorithm is recursively feasible and provides closed-loop stability and performance guarantees.
\end{abstract}

\begin{IEEEkeywords}
nonlinear model predictive control, learning-based control, adaptive control, tube model predictive control, convex optimization, successive linearization
\end{IEEEkeywords}

\section{Introduction}\label{introduction}
Model Predictive Control (MPC) is an optimal control approach with solid theoretical foundations and extensive applications in engineering practice~\cite{Kou16}. 
In many cases, accurate prediction models can be obtained using physical modelling or black-box estimation methods. However, due to the inherent uncertainty in any system model, provably safe robust and adaptive MPC strategies are a key focus of this research field.

There has been significant recent interest in integrating learning-based methods with robust MPC approaches~\cite{Hew20}. Like robust adaptive MPC algorithms, these techniques retain the primary benefits of robust MPC while leveraging information about the controlled system collected during the execution of a control task. Thus they are able to improve the accuracy of the system model and enhance closed-loop control performance.

While several methods for robust adaptive linear MPC have been proposed \cite{Lor19,Lu21}, the more general case of robust adaptive NMPC has received comparatively little attention \cite{Koh21,Ade09,Bue24}.
The approaches of \cite{Koh21} and \cite{Ade09} have strong system-theoretical properties, but are computationally intensive because they require the solution of a nonconvex program online. The method presented in~\cite{Bue24} is both theoretically sound and computationally efficient for low-order systems (due to its foundations in convex optimization). 
However, a key drawback of \cite{Bue24} is its limited scalability, since the number of optimization variables depends on the number of vertices of  polytopic sets describing the predicted state tube cross sections. 

This paper considers an alternative adaptive NMPC approach based on sequential convex approximations, but considering ellipsoidal tubes to bound the effects of the uncertainty in prediction. 
In this work we extend the theory of \cite{Can11} to the context of systems with additive and parametric uncertainty, and propose a robust adaptive nonlinear model predictive control algorithm based on set membership parameter estimation (SME) \cite{Bue24}. The resulting algorithm relies on the solution of a second order cone program, which provides a computationally efficient and scalable approach to solve the otherwise challenging robust NMPC problem, as we demonstrate via extensive numerical simulations.
To ensure recursive feasibility and closed-loop stability, the perturbations around state and control linearization points are limited to regions where the model approximation is meaningful and the effect of the approximation error is bounded by constructing tubes containing the predicted trajectories.
By incorporating a line search procedure into the online algorithm, the approach provides recursive feasibility, constraint satisfaction and performance guarantees, even if just one linearisation iteration is computed at each online time step.

\textit{Notation:} $\mathbb{N}_{\geq 0}$ is the set of non-negative integers, $\mathbb{N}_{[p,q]} = \{n\in\mathbb{N} : {p \leq n \leq q}\}$, and $\mathbb{N}_q =  \mathbb{N}_{[1,q]}$.
%
%
The $i$th row of a matrix $A$ is $[A]_i$.
For a matrix $A$, the inequality $A \geq 0$ applies elementwise, and $A \succeq 0$ (or $A \succ 0$) indicates that $A$ is positive semidefinite (positive definite).  
The Euclidean and infinity norms are $\|x\|$ and $\|x\|_\infty$, and $\|x\|_Q = (x^\top Q x)^{1/2}$.
%
%
%

\section{Problem statement}\label{problem_statement}

We consider a nonlinear system with unknown (but learnable) parametric dependency subject to an additive disturbance (caused by model error and physical disturbance inputs)
\begin{equation}\label{eq:system}
x_{t+1} = f(x_t,u_t,\theta) + w_t ,
\end{equation}
where $(x_t,u_t) \in \mathcal{X}\times\mathcal{U}$, $w_t\in \mathcal{W}$, $\theta \in \Theta_0$ and $t$ is the discrete time index.
The state, control and disturbance input, and model parameters belong to
bounded polytopic sets:
$\mathcal{X} = \{x \in \mathbb{R}^{n_x}  : E x \leq 1\}$,
$\mathcal{U} = \{u \in \mathbb{R}^{n_u} : G u \leq 1\}$,
$\mathcal{W}=\mathrm{co}\{w^{(r)}, \, r \in\mathbb{N}_{\nu_{w}}\}$,
$\Theta_0 = \{\theta \in \mathbb{R}^{n_{\theta}} : H_{\Theta} \theta \leq h_0\}=\mathrm{co}\{\theta^{(q)},\, q \in\mathbb{N}_{\nu_{\theta}}\}$.

We assume that the system can be represented as an affine combination of known basis functions $f_i(x,u)$, $i \in\smash{\mathbb{N}_{n_{\theta}}}$,
\begin{equation}\label{eq:theta_expansion}
f(x_t,u_t,\theta) = f_0(x_t,u_t) + \sum_{i=1}^{\smash{n_{\theta}}} {\theta_i f_i(x_t,u_t)} .
\end{equation}
where $f_i$ is differentiable and Lipschitz continuous on $\mathcal{X}\times\mathcal{U}$ and $f_i(0,0) = 0$ for all $i$. 
Our approach can be used with any parameter learning algorithm that provides a polytopic parameter set $\Theta_t$ satisfying
$\theta \in \Theta_t \subseteq \Theta_{t-1}$ for all $t>0$.

The objective of the control problem is to minimize a quadratic regulation cost (with $R\succ0$, $Q\succeq 0$) defined by
\begin{equation} \label{eq:cost}
\sum_{t=0}^{\infty} (\left\lVert x_t \right\rVert^2_Q +  \left\lVert u_t \right\rVert^2_R).
\end{equation}
To mitigate the effects of model uncertainty on predicted future state and control sequences, we consider the decision variables to be the variations $\{v_t, v_{t+1} ,\ldots\}$ relative to a feedback law,
\[
u_t = K x_t + v_t ,
\]
where the feedback gain $K$ is robustly stabilizing locally around $x=0$, in the sense defined in Section~\ref{sec:termset}.
To simplify notation, for all $(x,v)$ such that $(x,Kx+v)\in\mathcal{X}\times\mathcal{U}$ let
\[
f_{K}(x,v,\theta) = f(x,Kx+v,\theta)
\]
and $f_{K,i}(x,v) = f_i(x,Kx+v)$ for each $i\in\{0,\ldots,p\}$.

\section{Linearization error bounds in prediction}\label{sec:error_bounds}
We consider the Taylor expansion of the model in~\eqref{eq:system} around a nominal trajectory $\mathbf{x}^0 = \{\smash{x^0_0},\ldots, \smash{x^0_{N}}\}$ defined for a given
sequence $\mathbf{v}^0 = \{\smash{v^0_0}, \ldots, \smash{v^0_{N-1}}\}$ 
and parameter $\theta^0\in\Theta$ 
by
\begin{equation}\label{eq:nominal_system}
x^0_{k+1} = f_K(x_k^0, v^0_k, \theta^0), \ k = 0,\ldots, N-1.
\end{equation}
Defining state and control perturbations $s_k$ and $v_k$, where $x_k = x_k^0 + s_k$ and $u_k = Kx_k + v^0_k + v_k$, we have
\begin{equation}\label{eq:taylor}
x^0_{k+1}+s_{k+1} =  f_K(x^0_k,v^0_k,\theta^0) + \delta^0_k + \Phi_k s_k + B_k v_k + \delta^1_k + w_k
\end{equation}
where $\Phi_k = \nabla_x f_K (x^0_k,v^0_k,\theta^0)$ and $B_k = \nabla_v f_K (x^0_k,v^0_k,\theta^0)$ denote the Jacobian matrices of $f_K$ with respect to $x$ and~$v$.

The perturbations on the state and control input are constrained to satisfy bounds $s_k\in \mathcal{S}$, $v_k\in \mathcal{V}$ for all $k\in \mathbb{N}_{[0,N]}$, where $\mathcal{S}$, $\mathcal{V}$ are given polytopic sets containing the origin. The perturbation $s_k$ contains both a nominal component (due to the control perturbation $v_k$) and an uncertain component (due to linearization errors, parameter estimation errors and external disturbances prior to the $k$th time step of the prediction horizon). In this context we distinguish the zero-order error term $\delta^0$ and the first-order error term $\delta^1$ resulting respectively from parameter estimation and linearization errors as follows.

\begin{definition}\label{def:delta} The error terms $\delta^0_k$ and $\delta^1_k$ in \eqref{eq:taylor} are defined by
\begin{align*}
\delta^0_k &= f_K(x_k^0, v^0_k, \theta) - f_K(x_k^0, v^0_k, \theta^0)
\\
\delta^1_k &= f_K(x_k^0 + s_k, v^0_k + v_k, \theta) - f_K(x_k^0, v^0_k, \theta) - \Phi_k s_k \!-\! B_k v_k
\end{align*}
\end{definition}


Bounds on $\delta^0_k$ are derived from the affine dependence on $\theta$:
\[
\delta^0_k = f_K(x^0_k, v^0_k,{\theta}) -f_K(x^0_k, v^0_k,\theta^0) = f_K (x^0_k, v^0_k, \theta - \theta^0) ,
\]
which implies a polytopic additive disturbance bound: 
\begin{equation}\label{eq:W0_def}
\delta^0_k \in \mathcal{W}^0_k = \mathcal{W}^0(x^0_k,v^0_k,\theta^0,\Theta) = \mathrm{co}
\{\delta^{0\,(q)}_k, \, q \in \mathbb{N}_{\nu_\theta}\}.
\end{equation}
A tight bound can be obtained by elementwise maximization over $\theta \in \Theta$. This set can be recomputed online (based on the current trajectory and parameter estimation set) or can be computed as an offline, global bound.

Bounds on $\delta^1_k$ can similarly be derived using Definition~\ref{def:delta}. The mean value theorem implies, for some $(s,v)\in \mathcal{S}\times \mathcal{V}$,
\begin{align}
\delta^1_k &= \bigl(\nabla_x f_K(x^0_k+s, v^0_k+v, {\theta}) - \Phi_k\bigr) s_k
\nonumber \\
&\quad + \bigl(\nabla_v f_K (x^0_k+s, v^0_k+v, {\theta})- B_k \bigr) v_k . 
\label{eq:mvt_delta1}
\end{align}
A corresponding polytopic uncertainty set can be defined
\begin{align}
\delta^1_k \in \mathcal{W}^1_k 
&= \mathcal{W}^1  (x^0_k,v^0_k,\theta^0, \mathcal{S}, \mathcal{V}, \Theta) \nonumber
\\
&\quad = \mathrm{co} \{ C^{(j)}_ks_k + D^{(j)}_k v_k, \, j \in \mathbb{N}_{\nu_{1}} \}
\label{eq:W1_def}
\end{align}
where $\{C^{(j)}_k,D^{(j)}_k,\, j \in \mathbb{N}_{\nu_{1}}\} $ are determined, for example, by computing componentwise bounds on the Jacobian matrices in~(\ref{eq:mvt_delta1}) over $s\in \mathcal{S}$, $v\in \mathcal{V}$, $\theta \in \Theta$. This set can be recomputed online using the current nominal trajectory and estimated parameter  set or computed offline as a global bounding set.

\section{Tube membership conditions}
We consider the state decomposition $x_k = x^0_k + s_k$ and the control input decomposition $u_k = Kx_k + v^0_k + v_k$, where $s_k$ contains both the nominal and uncertain effects resulting from $w_k$, $\delta^0_k$ and $\delta^1_k$.
The perturbation dynamics can therefore be expressed:
\begin{equation}
s_{k+1} = \Phi_k s_k + B_k v_k + w_k + \delta^0_k + \delta^1_k
\end{equation}
where $w\in \mathcal{W}$, $\delta^0_k\in\mathcal{W}^0_k$ and $\delta^1_k\in\mathcal{W}^1_k$.
We split $s_k$ into nominal and uncertain components, denoted $z_k$ and $e_k$:
\begin{align}
s_k &= z_k + e_k \label{eq:decomp} \\
z_{k+1} &= \Phi_k z_k + B_kv_k \label{eq:z_dynamics}\\
e_{k+1} &= \Phi_k e_k + w_k + \delta^0_k + \delta^1_k
\label{eq:e_dynamics}
\end{align} for $k = 0,\ldots,N-1$.
Following the approach of \cite{Can11} we use ellipsoidal tube cross sections to bound the effects of uncertainty over the prediction horizon:
\begin{equation}\label{eq:tube_cond1}
e_k \in \mathcal{E}(V, \beta_k^2), 
\quad k = 0,\ldots,N 
\end{equation}
where $\mathcal{E}(V, \beta^2)= \{e: e^\top V e\leq \beta^2\}$. The design of ${V\succ0}$ is discussed in Section~\ref{sec:termset}. We note that the shape of the ellipsoidal tube is determined by the matrix $V$ and only the centre and scaling are optimized in the online optimization.

The conditions in~(\ref{eq:tube_cond1}) are ensured recursively via the tube membership conditions on $v_k$, $z_k$ and $\beta_k$,
\begin{multline}\label{eq:tube_mem_cond}
\mathcal{E}(V, \beta_{k+1}^2) \ni \Phi_k e + w + \delta^0 + \delta^1 , \\ 
\forall w \in \mathcal{W}, 
\ \forall \delta^0 \in \mathcal{W}^0_k, 
\ \forall \delta^1 \in \mathcal{W}^1_k, 
\ \forall e\in \mathcal{E}(V, \beta_k^2) .
\end{multline}
A sufficient condition for (\ref{eq:tube_mem_cond}) is given by, for all $j\in\mathbb{N}_{\nu_{1}}$, $q\in\mathbb{N}_{\nu_\theta}$, $r\in\mathbb{N}_{\nu_w}$, and all $e\in\mathcal{E}(V,\beta_k^2)$,
\begin{equation}\label{eq:tube_mem_cond_beta} 
\beta_{k+1} \geq 
\lVert C^{(j)}_k z_k + D^{(j)}_k v_k + \delta^{0\,(q)}_k \rVert_V + \lVert (\Phi_k + C^{(j)}_k)e + w^{(r)}\rVert_V .
\end{equation}
We enforce this condition using the following observation.

\begin{lemma}\label{lem:beta_mode1_dynamics}
Condition (\ref{eq:tube_mem_cond_beta}) holds for all $e\in\mathcal{E}(V,\beta_k^2)$ if
\begin{equation}
\beta_{k+1} \geq (\lambda_k \beta_k^2 + \sigma^2)^{\frac{1}{2}} + \lVert C^{(j)}_k z_k + D^{(j)}_k v_k + \delta^{0\,(q)}_k \rVert_V
\label{eq:beta_mode1_dynamics}
\end{equation}
for all $j\in\mathbb{N}_{\nu_{1}}$, $q\in\mathbb{N}_{\nu_\theta}$, with $\lambda_k$ defined by
\begin{equation}
\lambda_k = \max_{j\in\mathbb{N}_{\nu_1},\, r\in\mathbb{N}_{\nu_r}} \lVert
(\Phi_k + C_k^{(j)})V^{-\frac{1}{2}} \rVert_{\Psi^{(r)}}^2
\label{eq:lambda_mode1_def}
\end{equation}
where $\Psi^{(r)} = (V^{-1} - w^{(r)} w^{(r)\, \top} \sigma^{-2})^{-1}$ and where $\sigma$ is a constant whose design is discussed in Section~\ref{sec:termset}.
\end{lemma}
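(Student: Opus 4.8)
The plan is to fix indices $j$, $q$, $r$ and a vector $e\in\mathcal{E}(V,\beta_k^2)$, and to reduce the claim to the single bound $\lVert (\Phi_k + C_k^{(j)})e + w^{(r)}\rVert_V \le (\lambda_k\beta_k^2 + \sigma^2)^{1/2}$, established uniformly over all such $e$, $j$, $r$. Granting this, the right-hand side of \eqref{eq:tube_mem_cond_beta} is bounded above by $\lVert C_k^{(j)} z_k + D_k^{(j)} v_k + \delta_k^{0\,(q)}\rVert_V + (\lambda_k\beta_k^2 + \sigma^2)^{1/2}$, which is exactly the right-hand side of \eqref{eq:beta_mode1_dynamics}; hence \eqref{eq:beta_mode1_dynamics} implies \eqref{eq:tube_mem_cond_beta} for every admissible $j$, $q$, $r$ and every $e\in\mathcal{E}(V,\beta_k^2)$. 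No properties of $\lVert\cdot\rVert_V$ beyond this elementary substitution are needed.

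The crux is an ellipsoidal disturbance-absorption identity: for any fixed $w$ with $\sigma^2 > w^\top V w$ — which for each vertex $w^{(r)}$ is exactly the positivity condition $\Psi^{(r)}\succ0$, ensured by the design of $\sigma$ in Section~\ref{sec:termset} — and any $y$, one has $(y+w)^\top V(y+w) \le y^\top (V^{-1} - ww^\top\sigma^{-2})^{-1} y + \sigma^2$. I would prove this by using the Sherman--Morrison formula to write $(V^{-1} - ww^\top\sigma^{-2})^{-1} = V + Vww^\top V/(\sigma^2 - w^\top V w)$, substituting it into the claimed bound, and observing that the inequality collapses to $(a - c)^2/c \ge 0$ with $a = w^\top V y$ and $c = \sigma^2 - w^\top V w > 0$.

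With the identity established, I would apply it with $w = w^{(r)}$ and $y = (\Phi_k + C_k^{(j)})e$, obtaining $\lVert (\Phi_k + C_k^{(j)})e + w^{(r)}\rVert_V^2 \le e^\top (\Phi_k + C_k^{(j)})^\top \Psi^{(r)} (\Phi_k + C_k^{(j)}) e + \sigma^2$. Substituting $e = V^{-1/2}\tilde e$, so $\lVert\tilde e\rVert^2 = e^\top V e \le \beta_k^2$, the quadratic term equals $\tilde e^\top ((\Phi_k + C_k^{(j)})V^{-1/2})^\top \Psi^{(r)} ((\Phi_k + C_k^{(j)})V^{-1/2}) \tilde e$, which is at most $\lVert (\Phi_k + C_k^{(j)})V^{-1/2}\rVert_{\Psi^{(r)}}^2\,\beta_k^2 \le \lambda_k\beta_k^2$ by the Rayleigh-quotient bound and the definition \eqref{eq:lambda_mode1_def} of $\lambda_k$ (reading $\lVert M\rVert_{\Psi^{(r)}}$ as the induced norm $\lambda_{\max}(M^\top\Psi^{(r)}M)^{1/2}$). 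This gives the bound claimed in the first paragraph for all $j$, $r$ and all $e\in\mathcal{E}(V,\beta_k^2)$, completing the proof. I expect the main obstacle to be recognising and verifying the disturbance-absorption identity — specifically, the insight that a constant additive shift inside the ellipsoidal norm can be traded exactly for the shape-matrix inflation $V\to\Psi^{(r)}$ plus the scalar slack $\sigma^2$, together with keeping track of the positivity requirement $\sigma^2 > w^{(r)\top} V w^{(r)}$; the remaining algebra (Sherman--Morrison, completing the square, the norm bound) is routine.
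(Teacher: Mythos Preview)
Your proof is correct. The paper takes a different route: it invokes the S-procedure to convert ``$\lVert(\Phi_k+C_k^{(j)})e+w^{(r)}\rVert_V\le\mu$ for all $e\in\mathcal{E}(V,\beta_k^2)$'' into the existence of a multiplier $\lambda_k^{(j,r)}\ge0$ satisfying a $3\times3$ block LMI together with $\mu^2\ge\lambda_k^{(j,r)}\beta_k^2+\sigma^2$, and then reduces that LMI via Schur complements to $\lambda_k^{(j,r)} V\succeq(\Phi_k+C_k^{(j)})^\top\Psi^{(r)}(\Phi_k+C_k^{(j)})$, from which the minimal $\lambda_k$ in \eqref{eq:lambda_mode1_def} is read off. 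You bypass this machinery entirely by proving the pointwise inequality $(y+w)^\top V(y+w)\le y^\top\Psi^{(r)}y+\sigma^2$ directly via Sherman--Morrison and a completed square, and then applying a Rayleigh-quotient bound. The two arguments are equivalent in content---your identity is exactly what the Schur complement of the paper's LMI delivers---but yours is more elementary and self-contained, while the paper's LMI formulation matches the robust-control toolkit used elsewhere in the paper (e.g.\ Lemmas~\ref{lem:cost_bound} and~\ref{lem:beta_dynamics_term_bound}) and makes the structural connection to those results more visible.
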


\begin{proof}
Let $\mu^{(j,q)} = \beta_{k+1} - \lVert C^{(j)}_k z_k + D^{(j)}_k v_k + \delta^{0\,(q)}_k \rVert_V$, then~(\ref{eq:tube_mem_cond_beta}) holds if and only $\lambda_k^{(j,r)} \geq 0$ exists satisfying
\begin{equation}\label{eq:lambda_mode1_cond}
\begin{bmatrix}
\lambda_k^{(j,r)} V & 0 & ~~~(\Phi_k + C^{(j)}_k)^\top 
\\
\ast & \sigma^2 & {w^{(r)}}^\top
\\
\ast & \ast & V^{-1}
\end{bmatrix} \succeq 0 ,
\end{equation}
and ${\mu^{(j,q)}}^2 \geq  \lambda_k^{(j,r)}\beta_{k}^2 + \sigma^2$.
By Schur complements,~(\ref{eq:lambda_mode1_cond}) holds iff $\lambda_k^{(j,r)} V \succeq (\Phi_k + C^{(j)}_k)^\top 
\Psi^{(r)}(\Phi_k + C^{(j)}_k)$.
Choosing $\lambda_k$ as the smallest scalar satisfying $\lambda_k \geq \lambda_k^{(j,r)}$ therefore yields the sufficient conditions~{(\ref{eq:beta_mode1_dynamics}),
(\ref{eq:lambda_mode1_def})}.
\end{proof} 

\section{Ellipsoidal tube MPC subproblem}
At each iteration of the successive linearization algorithm,  a cost equivalent to an upper bound on the cost~(\ref{eq:cost}) is minimized
\[
J(\mathbf{x},\mathbf{u}) = \sum_{k=0}^{N-1} (\lVert x_k \rVert^2_Q +  \lVert u_k \rVert^2_R) + l_N^2 .
\]
Here $\mathbf{x}=\{x_0,\ldots,x_N\}$ is the state sequence generated by~(\ref{eq:system}) with control sequence $\mathbf{u}=\{u_0,\ldots,u_{N-1}\}$ and $l_N$ is a terminal cost discussed in Section~\ref{sec:termset}.
The minimization is performed subject to input and state constraints, terminal conditions, and constraints enforcing a sufficient cost decrease.
At each iteration $i$ at time $t$, we solve the following Second Order Cone Program (SOCP) given the current plant state $x^p_t$:
\begin{align} \label{opt:mpc}
& (\mathbf{v}^\star,\boldsymbol{\beta}^\star,\mathbf{z}^\star,\mathbf{l}^\star) =  \arg\min_{\mathbf{v},\boldsymbol{\beta},\mathbf{z},\mathbf{l}} \bar{J}^{(i)}_t =\sum_{k=0}^{N} l_k^2 \\ \nonumber
& \text{subject to, for $k=0,...,N-1$, and all $j\in\mathbb{N}_{\nu_{1}}$, $q\in\mathbb{N}_{\nu_\theta}$, 
}\\  \nonumber
& \quad \begin{aligned}
z_{k+1} & = \Phi_k z_k + B_kv_k \\
\beta_{k+1} & \geq 
(\lambda_k\beta_k^2 + \sigma^2)^\frac{1}{2} + 
\lVert C^{(j)}_k z_k + D^{(j)}_k v_k + \delta^{0\,(q)}_k \rVert_V \\
l_{k} & \geq \bigl( \lVert x^0_k +z_k\rVert_Q^2 
+ \lVert K(x^0_k + z_k) + v^0_k + v_k\rVert_R^2  \bigr)^{\frac{1}{2}}
\\
& \quad + \beta_k \lVert{V}^{-\frac{1}{2}}\rVert_{Q+K^\top R K}
\\
\mathcal{U} & \supset K\bigl(x_k^0+z_k+\mathcal{E}(V,\beta_k^2)\bigr) + v_k^0 + v_k \\
\mathcal{X} & \supset x_k^0+z_k+\mathcal{E}(V,\beta_k^2) \\
\mathcal{V} & \ni v_k^0 + v_k \\
\mathcal{S} & \supset z_k+\mathcal{E}(V,\beta_k^2) \\
\end{aligned} \\ \nonumber
& \text{and initial and terminal conditions} \\ \nonumber
& \quad \begin{aligned}
\beta_0 & \geq \lVert x^0_0 + z_0 - x^p_t\rVert_V \\
\Omega(x^0_N) & \ni (\|z_N\|_V, \beta_N) \\
l_N &\geq \hat{l}(z_N, \beta_N, x^0_N )
\end{aligned} \\ \nonumber
& \text{and, for iteration $i=1$,} \\ \nonumber
& \quad \bar{J}^{(i)}_t \leq \bar{J}^{(\mathit{final})}_{t-1} - \bigl(\lVert x_{t-1} \rVert^2_Q +  \lVert u_{t-1} \rVert^2_R -\hat{\sigma}^2 \bigr) \\ \nonumber
&\text{and, for iterations $i>1$,} \\  \nonumber
& \quad \bar{J}^{(i)}_t \leq \bar{J}^{(i-1)}_t .
\end{align}
Constraints of the form $\{x : H x \leq h\} \supset z + \mathcal{E}(V,\beta)$ are imposed via $[H]_i z + \beta \|\smash{V^{-\frac{1}{2}}}[H]_i^\top\| \leq [h]_i$ for each row $i$ of $H$.
The design of the terminal set $\Omega(x^0_N)$, terminal cost $\hat{l}$, and $\hat{\sigma}$ are discussed in Section~\ref{sec:termset}, and $\bar{J}_t^{(i-1)}$, $\bar{J}_{t-1}^{(\mathit{final})}$ denote the optimal objective at iteration $i-1$ and at the final iteration at time $t-1$.
The MPC strategy is summarised in Algorithm~\ref{alg:mpc}.



{\setlength{\algomargin}{0.83em}
\removelatexerror
\begin{algorithm2e}[h]
\DontPrintSemicolon
\SetKwInOut{Input}{Input}
\SetKwInOut{Output}{Output}
\Input{Initial perturbation sequence $\mathbf{v}^0$;
parameter estimate $\theta^0$; 
uncertainty bounds $\mathcal{W},\Theta$; 
cost weights $Q,R$;
tube parameters $\mathcal{S},\mathcal{V},V,\sigma$}
\Output{Control input $u_t$ at time steps $t = 0,1,\ldots$}
At time $t$, set $x^0_0 \gets x^p_t$, $\iter \gets 1$\;
\While{$\iter \leq \maxiter$ and $\|{\mathbf{v}^{\star}}\|\geq \algtol$}{
Compute $\mathbf{x}^0$ by simulating the nominal system 
(\ref{eq:nominal_system}) with initial state $x^0_0$ and perturbation sequence $\mathbf{v}^0$\;
Compute $\Phi_k,B_k$ in (\ref{eq:taylor}), bounds $\mathcal{W}^0_k, \mathcal{W}^1_k$, in (\ref{eq:W0_def}), (\ref{eq:W1_def}) and $\lambda_k$ in (\ref{eq:lambda_mode1_def}) for all $k\in \mathbb{N}_{[0,N-1]}$\; 
Attempt to solve Problem~{\rm(\ref{opt:mpc})} for $\mathbf{v}^{\star}$\; 
\If{Problem~{\rm(\ref{opt:mpc})} is infeasible}{
$\alpha \gets 1$, $\infeasflag \gets \algtrue$, $\lsiter \gets 1$\;
\While{$\lsiter \leq \maxlsiter$ and $\infeasflag = \algtrue$}{
$\alpha \gets \alpha/2$\;
\If{$\iter = 1$}{
$x^0_0 \gets x^0_{0,\old} + \alpha (x^0_0 -x^0_{0,\old})$\; 
}
$\mathbf{v}^0\gets \mathbf{v}^0_{\old} + \alpha (\mathbf{v}^0 -\mathbf{v}^0_{\old})$, $\lsiter \gets \lsiter + 1$\; 
Compute $\mathbf{x}^0$ using 
(\ref{eq:nominal_system}) with $x^0_0$ and $\mathbf{v}^0$\;
Compute $\Phi_k, B_k, \mathcal{W}^0_k, \mathcal{W}^1_k, \lambda_k, \forall k\in \mathbb{N}_{[0,N-1]}$\!\!\!\!\!\; 
Attempt to solve Problem~{\rm(\ref{opt:mpc})} for $\mathbf{v}^{\star}$\; 
\If{Problem~{\rm(\ref{opt:mpc})} is feasible}{
$\infeasflag \gets \algfalse$\;
}
}
\If{$\infeasflag = \algtrue$}{
$\mathbf{v}^{\star}\gets 0$, 
$\mathbf{v}^0 \gets \mathbf{v}^0_{\old}$, $i\gets \maxiter$\;
\If{$\iter = 1$}{
$\mathbf{x}^0 \gets \mathbf{x}^0_{\old}$\; 
}
}
}
Store $\mathbf{v}^0_{\old} \gets \mathbf{v}^0$\;
Update $\mathbf{v}^0 \gets \mathbf{v}^0 +\mathbf{v}^{\star}$, $\iter\gets \iter+1$\; 
}
Apply the control input $u_t= K x^p_t+ v^0_0$ and set
$\mathbf{v}^0 \gets \{v^0_1, \ldots, v^0_{N-1},0\}$
$\mathbf{v}^0_{\old} \gets \{v^0_{1,\old}, \ldots, v^0_{N-1,\old},0\}$
$\mathbf{x}^0_{\old} \gets \{x^0_{1},\ldots,x^0_{N},f_K(x_{N}^0,0,\theta^0)\}$\; 
\caption{Ellipsoidal Tube MPC}\label{alg:mpc}
\end{algorithm2e}}

The main iteration of Algorithm~\ref{alg:mpc} (lines 2-23) computes the Jacobian linearization of the plant model about the nominal trajectory generated by $\mathbf{v}^0$ (lines 3-4), and attempts to solve Problem~(\ref{opt:mpc}) (line 5).
Although the constraints of Problem~(\ref{opt:mpc}) ensure that 
the nominal trajectory $\mathbf{x}^0$ generated by~(\ref{eq:nominal_system}) satisfies $(x_k^0,u_k^0) \in (\mathcal{X},\mathcal{U})$ $\forall k\in \mathbb{N}_{[0,N-1]}$ and $x_{N}^0 \in \mathcal{X}_N$, the linearized dynamics determined in line 4 may not define a feasible set of constraints for Problem~(\ref{opt:mpc}). We therefore perform a backtracking line search in lines~8-21,  exploiting knowledge of a perturbation sequence $\mathbf{v}^0_{\mathrm{old}}$ and an initial nominal state $x^0_0$ such that Problem~(\ref{opt:mpc}) is feasible. As we show in Section~\ref{sec:stability}, this provides a guarantee of feasibility of at least one instance of Problem~(\ref{opt:mpc}) at each time step $t$.

\section{Terminal constraints and terminal cost} \label{sec:termset}
This section discusses how to compute the parameters $V,K,\sigma,\hat{\sigma}$ in problem~\eqref{opt:mpc}
and how to construct the terminal set $\Omega$ and terminal cost $\hat{l}(z,\beta,x^0)$.
In order to design a terminal cost and constraint set providing recursive feasibility and stability guarantees, we define the predicted trajectories of the model~(\ref{eq:system}) beyond the initial $N$-step prediction horizon by setting $v_k = 0$, $v^0_k = 0$ and $u_k=K x_k$ for  $k\geq N$.
To allow robust linear control design methods, we construct a linear difference inclusion (LDI) (e.g.~\cite{Boy94}) for each basis function $f_i$ in~($\ref{eq:theta_expansion}$) in a neighbourhood of $(x,u)=(0,0)$. Considering all affine combinations of the individual LDIs over the parameter set $\Theta_0$ yields an aggregate LDI. 
Hence, for all $\theta \in \Theta_0$, 
$x\in\hat{\mathcal{X}}$, $u\in\hat{\mathcal{U}}$,
for given polytopic sets $\hat{\mathcal{X}}$, $\hat{\mathcal{U}}$, let
\[
\begin{bmatrix}
\nabla_x f(x, u, \theta) & \nabla_u f(x,u,\theta)\end{bmatrix} \in \mathrm{co} \bigl\{ \begin{bmatrix}\hat{A}^{(j)}  & \hat{B}^{(j)} \end{bmatrix}, \, j \in \mathbb{N}_{\hat{\nu}} \bigr\} ,
\]
then, for all $(x_k,u_k)\in\hat{\mathcal{X}}\times\hat{\mathcal{U}}$ we have
\begin{equation}
f(x_k,u_k,\theta) \in\mathrm{co}\{\hat{A}^{(j)} x_k + \hat{B}^{(j)} u_k, \, j \in \mathbb{N}_{\hat{\nu}} \} .
\label{eq:ldi_term}
\end{equation} 
To simplify notation, let $\bar{\mathcal{X}} = \hat{\mathcal{X}}\cap\{x: Kx \in \mathcal{\hat{U}}\}$.

\begin{remark}
The LDI~(\ref{eq:ldi_term}) can be computed using the vertices of $\Theta_0$ and bounds on the Jacobians of the basis functions $f_i$ in (\ref{eq:theta_expansion}), analogously to the bounds on $\delta_k^1$ in Section~\ref{sec:error_bounds}.
\end{remark}

\begin{lemma} \label{lem:cost_bound}
For all $x\in\smash{\bar{\mathcal{X}}}$
the inequality
\begin{equation} \label{eq:cost_bound}
\lVert x \rVert^2_V - \lVert f(x,Kx,\theta) +w \rVert^2_V
\geq \lVert x \rVert^2_Q +  \lVert K x  \rVert^2_R - \sigma^2
\end{equation}
holds for all $\theta\in \Theta_0$ and all $w\in\mathcal{W}$ for positive definite $V$ and some scalar $\sigma$ if the following Linear Matrix Inequality (LMI) holds for all $j\in \mathbb{N}_{\hat{\nu}}$ and $ r \in \mathbb{N}_{\nu_w}$:
\begin{equation} \label{eq:cost_lmi}
\begin{bmatrix}
S & 0 & (\hat{A}^{(j)}S + \hat{B}^{(j)}Y)^\top & S & Y^\top \\
\ast & \tau & {w^{(r)}}^\top & 0 & 0 \\
\ast & \ast & S & 0 & 0 \\
\ast & \ast & \ast & Q^{-1} & 0 \\
\ast & \ast & \ast & \ast & \mathbb{R}^{-1}
\end{bmatrix} \succeq 0
\end{equation} with $V=S^{-1}$, $\sigma^2=\tau$, and $K =YV$.
\end{lemma}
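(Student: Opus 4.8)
The plan is to establish that the LMI~\eqref{eq:cost_lmi} is, after taking Schur complements and a congruence transformation, a lifted vertexwise form of the dissipation inequality~\eqref{eq:cost_bound}, and then to recover~\eqref{eq:cost_bound} for all $\theta\in\Theta_0$ and $w\in\mathcal{W}$ by convexity together with the LDI~\eqref{eq:ldi_term}.

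\emph{Step~1: reduce the LMI to a vertexwise inequality.} Substitute $V=S^{-1}$, $Y=KS$ (so $K=YV$) and $\tau=\sigma^2$. The lower-right $3\times3$ block of~\eqref{eq:cost_lmi} is $\mathrm{diag}(S,Q^{-1},R^{-1})\succ0$ (take $Q\succ0$; if $Q$ is only positive semidefinite, replace it by $Q+\epsilon I$ and let $\epsilon\downarrow0$ at the end). Taking the Schur complement of~\eqref{eq:cost_lmi} with respect to this block and then applying the congruence transformation by $\mathrm{diag}(V,1)$, and using $\hat{A}^{(j)}S+\hat{B}^{(j)}Y=(\hat{A}^{(j)}+\hat{B}^{(j)}K)S$, I expect to arrive at
\[
\begin{bmatrix} V-M^{(j)} & -(\hat{A}^{(j)}+\hat{B}^{(j)}K)^\top V w^{(r)} \\ \ast & \sigma^2-\lVert w^{(r)}\rVert_V^2 \end{bmatrix}\succeq0 ,
\quad j\in\mathbb{N}_{\hat{\nu}},\ r\in\mathbb{N}_{\nu_w},
\]
with $M^{(j)}=(\hat{A}^{(j)}+\hat{B}^{(j)}K)^\top V(\hat{A}^{(j)}+\hat{B}^{(j)}K)+Q+K^\top RK$. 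Evaluating this quadratic form on the vector $(x,1)$ and regrouping the expansion into a single square yields, for every $x\in\mathbb{R}^{n_x}$ and all $j,r$,
\[
\lVert(\hat{A}^{(j)}+\hat{B}^{(j)}K)x+w^{(r)}\rVert_V^2 \leq \lVert x\rVert_V^2-\lVert x\rVert_Q^2-\lVert Kx\rVert_R^2+\sigma^2 .
\]

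\emph{Step~2: lift to the LDI and the disturbance set.} Fix $x\in\bar{\mathcal{X}}$, $\theta\in\Theta_0$, $w\in\mathcal{W}$. Since $x\in\bar{\mathcal{X}}$ gives $x\in\hat{\mathcal{X}}$ and $Kx\in\hat{\mathcal{U}}$, the LDI~\eqref{eq:ldi_term} provides weights $\mu_j\geq0$, $\sum_j\mu_j=1$, with $f(x,Kx,\theta)=\sum_j\mu_j(\hat{A}^{(j)}+\hat{B}^{(j)}K)x$, and $w=\sum_r\gamma_r w^{(r)}$ with $\gamma_r\geq0$, $\sum_r\gamma_r=1$. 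As $\sum_j\mu_j=\sum_r\gamma_r=1$, we obtain $f(x,Kx,\theta)+w=\sum_{j,r}\mu_j\gamma_r\bigl[(\hat{A}^{(j)}+\hat{B}^{(j)}K)x+w^{(r)}\bigr]$, a convex combination of the vertex vectors of Step~1. Convexity of $\lVert\cdot\rVert_V^2$ followed by the vertex inequality applied term by term gives $\lVert f(x,Kx,\theta)+w\rVert_V^2\leq\lVert x\rVert_V^2-\lVert x\rVert_Q^2-\lVert Kx\rVert_R^2+\sigma^2$, which on rearrangement is exactly~\eqref{eq:cost_bound}.

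\emph{Main obstacle.} The conceptual content is light; the work is the bookkeeping in Step~1 — pairing the lower-right blocks $S,Q^{-1},R^{-1}$ with the first block-column entries $(\hat{A}^{(j)}S+\hat{B}^{(j)}Y)^\top,S,Y^\top$ and with the second block-column entry $w^{(r)\top}$ so that the $(1,1)$ product collapses to $SM^{(j)}S$ while the $(1,2)$ product is the cross term $S(\hat{A}^{(j)}+\hat{B}^{(j)}K)^\top V w^{(r)}$, and checking that after the congruence the constant $\tau$ resurfaces as $\sigma^2$ precisely where it is needed to complete the square $\lVert(\hat{A}^{(j)}+\hat{B}^{(j)}K)x+w^{(r)}\rVert_V^2$. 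It is worth noting explicitly that the vertex inequality of Step~1 holds for all $x\in\mathbb{R}^{n_x}$, so the restriction $x\in\bar{\mathcal{X}}$ enters only through the LDI in Step~2.
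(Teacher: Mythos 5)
Your proof is correct and follows essentially the same route as the paper's (one-line) proof: a Schur complement of~(\ref{eq:cost_lmi}) with respect to the block $\mathrm{diag}(S,Q^{-1},R^{-1})$, a congruence by $\mathrm{diag}(V,1)$, and substitution of the LDI~(\ref{eq:ldi_term}) together with convexity of $\|\cdot\|_V^2$ over the vertices of $\Theta_0$ and $\mathcal{W}$. Your explicit handling of a possibly singular $Q$ via $Q+\epsilon I$ and your remark that the vertex inequality holds for all $x\in\mathbb{R}^{n_x}$ (with $\bar{\mathcal{X}}$ entering only through the LDI) are correct details the paper leaves implicit.
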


\begin{proof}
This follows by substituting (\ref{eq:ldi_term}) into condition (\ref{eq:cost_bound})
and by considering Schur complements.
\end{proof}

To compute $V$, $\sigma$ and $K$ we minimize $\tau$ subject to (\ref{eq:cost_lmi}) by solving a semidefinite program. This approach is justified since (\ref{eq:cost_bound}) implies that $\sigma^2$ bounds the time-average value of the stage cost in (\ref{eq:cost}) as $t\to\infty$ under the control law $u_t=Kx_t$.

To determine the terminal constraint set for the tube parameters in Problem~(\ref{opt:mpc}) we first introduce the notation $\smash{\hat{Q}} = Q+K^\top R K$, $\smash{\hat{\Phi}^{(j)}} = \smash{\hat{A}^{(j)}+\hat{B}^{(j)}K}$, $j\in\mathbb{N}_{\hat{\nu}}$, and define
\begin{align}
\hat{\lambda} &= 1 - \sigma_{\min} ( V^{-\frac{1}{2}} \hat{Q} V^{-\frac{1}{2}} )
\label{eq:lambda_term}
\\
d_\Theta &= \max_{\theta^0,\theta^1 \in\Theta_0} \| \theta^0 - \theta^1 \|_1
\label{eq:Theta_diam}
\\
d_{\hat{\Phi}} &= \max_{j,k\in\mathbb{N}_{\hat{\nu}}} \|
\hat{\Phi}^{(j)} - \hat{\Phi}^{(k)}\|_V .
\label{eq:Phi_diam}
\end{align}%
Here $\hat{\lambda}\in [0,1)$ since a Schur complement of (\ref{eq:cost_lmi}) implies $V\succeq \smash{\hat{Q}}$.
We further assume $f_{K,i}(x,0)$ for each $i\in\{0,\ldots,p\}$ is $L$-Lipschitz continuous with respect to $x$, for all $x\in\smash{\bar{\mathcal{X}}}$. 

\begin{lemma}\label{lem:term_tube}
If $v_k=v^0_k=0$
in~(\ref{eq:taylor}) and (\ref{eq:decomp})-(\ref{eq:e_dynamics}), and if 
$x^0_k\in\smash{\bar{\mathcal{X}}}$ and $x_k \in x^0_k+z_k+\mathcal{E}(V,\beta_k^2)\subseteq\smash{\bar{\mathcal{X}}}$,
then $e_k \in \mathcal{E}(V,\beta_k^2)$ and
\begin{align}
\|x^0_{k+1}\|_V &\leq \hat{\lambda}^{\frac{1}{2}} \|x^0_k\|_V 
\label{eq:x0_term_bound}
\\
\|z_{k+1}\|_V &\leq \hat{\lambda}^{\frac{i}{2}} \|z_k\|_V 
\label{eq:z_term_bound}
\\
\beta_{k+1} &\geq (\hat{\lambda} \beta_{k}^2 +\sigma^2)^\frac{1}{2} + d_{\hat{\Phi}} \|z_{k}\|_V + 
d_\Theta L \|x_k^0\|_V .
\label{eq:beta_term_bound}
\end{align}
\end{lemma}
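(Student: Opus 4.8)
The plan is to verify the three inequalities in order, each building on structure already established. First I would establish $e_k\in\mathcal{E}(V,\beta_k^2)$ together with \eqref{eq:beta_term_bound} as a single inductive statement: assuming $e_k\in\mathcal{E}(V,\beta_k^2)$, I apply the tube membership reasoning of Lemma~\ref{lem:beta_mode1_dynamics}, but now in the terminal regime where $v_k=v^0_k=0$ and the exact (rather than linearized) dynamics are bounded by the LDI \eqref{eq:ldi_term}. Concretely, with $v_k=v^0_k=0$ the perturbation recursion \eqref{eq:e_dynamics} still reads $e_{k+1}=\Phi_k e_k + w_k + \delta^0_k + \delta^1_k$, and since $x_k,x_k^0\in\bar{\mathcal{X}}$ the combined term $\Phi_k e_k + \delta^1_k$ equals $\nabla_x f_K(\xi,0,\theta)\,e_k$ evaluated at an intermediate point (mean value theorem, as in \eqref{eq:mvt_delta1}), which lies in $\mathrm{co}\{\hat\Phi^{(j)} e_k\}$; the zero-order term $\delta^0_k = f_K(x_k^0,0,\theta-\theta^0)$ is controlled by Lipschitz continuity of the $f_{K,i}$ and the $\ell_1$-diameter of $\Theta_0$, giving $\|\delta^0_k\|_V\le d_\Theta L\|x_k^0\|_V$. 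Using $\|\hat\Phi^{(j)} e\|_V \le \|\hat\Phi^{(k_0)} e\|_V + d_{\hat\Phi}\|e\|_V$ for a reference index $k_0$, and the contraction $\|\hat\Phi^{(k_0)} e\|_V^2 \le \hat\lambda\|e\|_V^2$ coming from \eqref{eq:lambda_term} and $V\succeq\hat Q$, one gets a bound of the form "$(\hat\lambda\beta_k^2+\sigma^2)^{1/2}$ plus nominal/zero-order terms" exactly as in \eqref{eq:beta_mode1_dynamics}. Choosing $\beta_{k+1}$ to satisfy \eqref{eq:beta_term_bound} then certifies $e_{k+1}\in\mathcal{E}(V,\beta_{k+1}^2)$, closing the induction (the base case $e_0\in\mathcal{E}(V,\beta_0^2)$ being inherited from the horizon conditions).

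Second, \eqref{eq:x0_term_bound}: with $v^0_k=0$, the nominal dynamics \eqref{eq:nominal_system} are $x^0_{k+1}=f_K(x^0_k,0,\theta^0)=f(x^0_k,Kx^0_k,\theta^0)$, and since $x^0_k\in\bar{\mathcal{X}}$ the LDI \eqref{eq:ldi_term} gives $x^0_{k+1}\in\mathrm{co}\{\hat\Phi^{(j)}x^0_k\}$. Then $\|x^0_{k+1}\|_V^2 \le \max_j \|\hat\Phi^{(j)} x^0_k\|_V^2 \le \hat\lambda\|x^0_k\|_V^2$, where the last step is precisely Lemma~\ref{lem:cost_bound} with $w=0$ and $\theta=\theta^0$: inequality \eqref{eq:cost_bound} rearranges to $\|\hat\Phi^{(j)}x\|_V^2 \le \|x\|_V^2 - \|x\|_Q^2 - \|Kx\|_R^2 + \sigma^2 \le \|x\|_V^2 - \sigma_{\min}(V^{-1/2}\hat Q V^{-1/2})\|x\|_V^2 + \sigma^2$; dropping the $\sigma^2$ when $w=0$ and using the definition \eqref{eq:lambda_term} yields $\hat\lambda\|x\|_V^2$. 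Taking square roots gives \eqref{eq:x0_term_bound}.

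Third, \eqref{eq:z_term_bound}: with $v_k=0$ the nominal-component recursion \eqref{eq:z_dynamics} becomes $z_{k+1}=\Phi_k z_k$, and here $\Phi_k = \nabla_x f_K(x^0_k,0,\theta^0)$ is the exact Jacobian at a point in $\bar{\mathcal{X}}$, hence $\Phi_k\in\mathrm{co}\{\hat\Phi^{(j)}\}$, so again $\|z_{k+1}\|_V \le \hat\lambda^{1/2}\|z_k\|_V$ by the same contraction estimate. (I note the exponent $\hat\lambda^{i/2}$ in \eqref{eq:z_term_bound} should read $\hat\lambda^{1/2}$ — a single step contraction — which is what the argument delivers; the iterated form follows by composition if intended.)

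The main obstacle is the first part: carefully justifying that the \emph{sum} $\Phi_k e_k + \delta^1_k$ — not the two terms separately — admits the clean LDI bound $\mathrm{co}\{\hat\Phi^{(j)}e_k\}$, which requires the mean-value representation of $\delta^1_k$ from \eqref{eq:mvt_delta1} to collapse with $\Phi_k s_k$ (here $s_k=e_k$ since $z_k$ is tracked separately only through its own recursion) into a single Jacobian-times-$e_k$ term evaluated along the segment, and then invoking that this Jacobian lies in the LDI hull because both endpoints lie in $\hat{\mathcal{X}}\times\hat{\mathcal{U}}$ (guaranteed by the hypothesis $x_k, x_k^0\in\bar{\mathcal{X}}$). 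Once that reduction is in place, the $\|\cdot\|_V$-triangle-inequality split into a contracting part, a $d_{\hat\Phi}$-mismatch part, and a $d_\Theta L$ parameter-error part is routine, and the interaction with $\sigma^2$ via the $\Psi^{(r)}$-type S-procedure argument is already packaged in Lemma~\ref{lem:beta_mode1_dynamics}.
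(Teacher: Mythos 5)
Your treatments of \eqref{eq:x0_term_bound}, \eqref{eq:z_term_bound} and of the zero-order term $\|\delta^0_k\|_V \leq d_\Theta L\|x^0_k\|_V$ match the paper's proof, and your observation that the exponent in \eqref{eq:z_term_bound} should be $\hat{\lambda}^{1/2}$ for a single step is consistent with what the paper actually proves. However, there is a genuine error in your handling of $\delta^1_k$: you assert that $s_k = e_k$ ``since $z_k$ is tracked separately only through its own recursion,'' but by \eqref{eq:decomp} the perturbation entering the mean-value representation \eqref{eq:mvt_delta1} is $s_k = z_k + e_k$. The correct collapse is therefore
\[
\Phi_k e_k + \delta^1_k \;=\; \nabla_x f_K(\xi,0,\theta)\,e_k \;+\; \bigl(\nabla_x f_K(\xi,0,\theta) - \Phi_k\bigr) z_k ,
\]
and the second term --- the linearization residual acting on the \emph{nominal} perturbation $z_k$, which cannot be absorbed into the $z$-recursion because $z_{k+1}=\Phi_k z_k$ uses the frozen Jacobian --- is precisely the source of the $d_{\hat{\Phi}}\|z_k\|_V$ term in \eqref{eq:beta_term_bound} (both Jacobians lie in $\mathrm{co}\{\hat{\Phi}^{(j)}\}$, so their difference is bounded by the diameter $d_{\hat{\Phi}}$). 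Your reduction drops this contribution entirely, so the recursion you would derive for $\beta_{k+1}$ omits a real error term and the claimed containment $e_{k+1}\in\mathcal{E}(V,\beta_{k+1}^2)$ would not follow. Conversely, the $d_{\hat{\Phi}}\|e\|_V$ term you introduce by splitting $\|\hat{\Phi}^{(j)}e\|_V \leq \|\hat{\Phi}^{(k_0)}e\|_V + d_{\hat{\Phi}}\|e\|_V$ is unnecessary and does not appear in the lemma: each $\hat{\Phi}^{(j)}$ individually satisfies $\|\hat{\Phi}^{(j)}e\|_V^2 \leq \hat{\lambda}\|e\|_V^2$, so the convex hull $\mathrm{co}\{\hat{\Phi}^{(j)}e_k\}$ is contracted directly.

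Two smaller points. First, the contraction $\|\hat{\Phi}^{(j)}x\|_V^2 \leq \|x\|_V^2 - \|x\|_{\hat{Q}}^2$ should be read off the LMI \eqref{eq:cost_lmi} (by dropping the disturbance row/column), not from \eqref{eq:cost_bound} with ``$w=0$ and the $\sigma^2$ dropped'' --- inequality \eqref{eq:cost_bound} retains the $-\sigma^2$ even at $w=0$, so it does not by itself deliver the homogeneous bound you need. Second, the paper does not route the disturbance through the $\Psi^{(r)}$ S-procedure of Lemma~\ref{lem:beta_mode1_dynamics} here; it uses the simpler consequence $\|\hat{\Phi}^{(j)}x + w\|_V^2 \leq \hat{\lambda}\|x\|_V^2 + \sigma^2$ of \eqref{eq:cost_lmi}, which already yields the $(\hat{\lambda}\beta_k^2+\sigma^2)^{1/2}$ term after the triangle-inequality split.
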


\begin{proof}
From (\ref{eq:ldi_term}) with $u_k=Kx_k$ and $x^0_k,z_k,e_k\in \bar{\mathcal{X}}$ we get
\begin{align*}
x^0_{k+1} &= f_K(x^0_k , 0, \theta^0)
\in\mathrm{co}\{\hat{\Phi}^{(j)} x^0_k , \, j \in \mathbb{N}_{\hat{\nu}} \} 
\\
z_{k+1} &= \Phi_k z_k 
\in\mathrm{co}\{\hat{\Phi}^{(j)} z_k , \, j \in \mathbb{N}_{\hat{\nu}} \} 
\\
e_{k+1} &= \Phi_k e_k + w_k + \delta^0_k + \delta^1_k 
\\
&\in\mathrm{co}\{\hat{\Phi}^{(j)} e_k , \, j \in \mathbb{N}_{\hat{\nu}} \} + w_k + \mathrm{co}\{ (\hat{\Phi}^{(j)} \!-\! \Phi_k) z_k \} + \delta^0_k 
\end{align*}
but (\ref{eq:cost_lmi}) ensures that, for all  $w\in\mathcal{W}$,  $j \in \mathbb{N}_{\hat{\nu}}$, and all $x\in\mathbb{R}^{n_x}$,
\begin{align*}
\|\hat{\Phi}^{(j)} x\|_V^2 
&\leq \|x\|_V^2 - \|x\|_{\hat{Q}}^2 \leq \hat{\lambda} \|x\|_V^2
\\
\|\hat{\Phi}^{(j)} x + w\|_V^2 
&\leq \|x\|_V^2 - \|x\|_{\hat{Q}}^2 + \sigma^2 \leq \hat{\lambda} \|x\|_V^2 + \sigma^2
\end{align*}
since (\ref{eq:lambda_term}) implies $V - \smash{\hat{Q}} \preceq \smash{\hat{\lambda}}V$. 
Furthermore, for all $j\in\mathbb{N}_{\hat{\nu}}$,
\[
\|\mathrm{co}\{ (\hat{\Phi}^{(j)} - \Phi_k) z_k \}\|_V 
\leq 
d_{\hat{\Phi}} \| z_k\|_V ,
\]
and $\delta^0_k = f_K(x_k^0,0,\theta - \theta^0)$ satisfies
\[
\|\delta^0_k\|_V \leq L \|x_k^0\|_V \|\theta \!-\! \theta^0\|_1
\leq 
d_{\Theta} L \| x^0_k\|_V .
\]
Hence $x^0_k$, $z_k$ satisfy (\ref{eq:x0_term_bound}), (\ref{eq:z_term_bound}), and $e_k$ satisfies  
\[
\|e_{k+1}\|_V \leq 
(\hat{\lambda} \|e_{k}\|_V^2 +\sigma^2)^\frac{1}{2}
+ d_{\hat{\Phi}} \|z_{k}\|_V + 
d_\Theta L \|x_k^0\|_V ,
\]
from which the bound~(\ref{eq:beta_term_bound}) follows.
\end{proof}

The terminal set $\Omega(x^0_N)$ is constructed so that 
$Hx_k \leq h$ for all $k\geq N$ whenever $(\|z_N\|_V,\beta_N) \in\Omega(x^0_N)$, where
\[
\{x : Hx \leq h\}
 = \mathcal{X} \cap \hat{\mathcal{X}} \cap \{x : Kx \in \mathcal{U} \cap \hat{\mathcal{U}}\} 
\]
is the aggregate constraint set.
To ensure this we impose the condition $\|x^0_k + z_k\|_V + \beta_k \leq \hat{\rho}$ for all $k\geq N$, where 
\begin{equation}\label{eq:rho_def}
\hat{\rho} = \min_i \bigl\{ [h]_i/\|[H]_i^\top\|_{V^{-1}}\bigr\} ,
\end{equation}
by defining the terminal constraint set for $(\|z_N\|_V, \beta_N)$ as%
\begin{align}
& \Omega(x^0_N) = \bigl\{
(r,\beta_N) : 
\beta_N \leq \hat{\rho} - (r + \|x^0_N\|_V)
\nonumber\\
& \quad \text{and } \exists \beta_{k} \text{ satisfying, for }
k=N+1,\ldots, N+\hat{N}, 
\nonumber\\
& \quad \beta_{k} \geq 
\smash{(\hat{\lambda}\beta_{k-1}^2 + \sigma^2)^{\frac{1}{2}}}
+ \smash{\hat{\lambda}^{\frac{(k-N-1)}{2}}} (r d_{\hat{\Phi}} +  d_\Theta L \|x^0_N\|_{V}) ,
\nonumber\\
& \quad \beta_{k}
\leq \hat{\rho} - \smash{\hat{\lambda}^\frac{k-N}{2}}(r + \|x^0_N\|_V) \bigr\}
\label{eq:term_constraint_set}
\end{align}
with $\hat{N}$ chosen large enough to satisfy 
\begin{align}
\max_{(r,\beta_N)\in\Omega(x^0_N)} & \bigl\{
(\smash{\hat{\lambda}}\beta_{N+\hat{N}\!}^2 + \sigma^2)^{\frac{1}{2}\!} 
+ \smash{\hat{\lambda}^\frac{\hat{N}}{2}\!} (r d_{\hat{\Phi}} +  d_\Theta L \|x^0_N\|_V) 
\nonumber \\
&\quad + \smash{\hat{\lambda}^\frac{\hat{N}+1}{2}} (r +  \|x_k^0\|_V \bigr\} 
\leq \hat{\rho}
.
\label{eq:Nc_def}
\end{align}

\begin{lemma}\label{lem:termset}
If $(\|z_N\|_V,\beta_N)\in\Omega(x^0_N)$, then $Hx_N \leq h$ 
and  
$(\|z_{k}\|_V,\beta_{k})\in \Omega(x^0_k)$
for all $k > N$, 
where $\beta_k$ for all $k>N$ satisfies (\ref{eq:beta_term_bound}) with the inequality replaced by equality.
\end{lemma}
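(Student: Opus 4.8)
The plan is to establish the two claims of Lemma~\ref{lem:termset} by a forward induction on the time index $k \geq N$, using the recursive structure built into the definition~(\ref{eq:term_constraint_set}) of $\Omega$. First I would verify the base case at $k=N$: the membership $(\|z_N\|_V,\beta_N)\in\Omega(x^0_N)$ gives directly, via the first inequality in~(\ref{eq:term_constraint_set}) with $r=\|z_N\|_V$, that $\|x^0_N + z_N\|_V + \beta_N \leq \|x^0_N\|_V + \|z_N\|_V + \beta_N \leq \hat\rho$; combining this with the definition~(\ref{eq:rho_def}) of $\hat\rho$ and the row-wise constraint tightening $[H]_i z + \beta\|V^{-1/2}[H]_i^\top\| \leq [h]_i$ (the same device used throughout Problem~(\ref{opt:mpc})) yields $Hx_N \leq h$, since $x_N \in x^0_N + z_N + \mathcal{E}(V,\beta_N^2)$.

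For the inductive step I would show that if $(\|z_k\|_V,\beta_k)\in\Omega(x^0_k)$ then $(\|z_{k+1}\|_V,\beta_{k+1})\in\Omega(x^0_{k+1})$, where $x^0_{k+1}$, $z_{k+1}$, $\beta_{k+1}$ evolve by~(\ref{eq:x0_term_bound})--(\ref{eq:beta_term_bound}) with equality in the $\beta$ recursion. The key observation is that $\Omega(x^0_k)$ is defined so that the auxiliary sequence $\beta_{k}, \beta_{k+1}, \ldots, \beta_{k+\hat N}$ appearing in its definition, shifted by one index, furnishes exactly the certificate required for membership in $\Omega(x^0_{k+1})$ — one must check that the shifted sequence $\beta_{k+2},\ldots,\beta_{k+\hat N+1}$ (extending by one more step using~(\ref{eq:beta_term_bound}) with equality) still satisfies the lower and upper bounds in~(\ref{eq:term_constraint_set}) relative to the new centre $x^0_{k+1}$ and new radius $r' = \|z_{k+1}\|_V$. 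Here I would use the contraction estimates $\|x^0_{k+1}\|_V \leq \hat\lambda^{1/2}\|x^0_k\|_V$ and $\|z_{k+1}\|_V \leq \hat\lambda^{1/2}\|z_k\|_V$ from Lemma~\ref{lem:term_tube} to absorb the geometric factors $\hat\lambda^{(k-N)/2}$ that are "pre-loaded" into the definition of $\Omega$, so that the bounds indexed from $x^0_N$ translate correctly into bounds indexed from $x^0_{k+1}$. The terminal truncation condition~(\ref{eq:Nc_def}) is what guarantees that extending the certificate by one step keeps it feasible — it ensures $\beta_{k+\hat N+1}$ can be chosen to satisfy both its lower bound and the upper bound $\hat\rho - \hat\lambda^{(k+\hat N+1-N)/2}(r+\|x^0_N\|_V)$, closing the induction. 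Finally, the first inequality in~(\ref{eq:term_constraint_set}) holding for $(\|z_{k+1}\|_V,\beta_{k+1})$ gives $Hx_{k+1}\leq h$ by the same tightening argument as in the base case.

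The main obstacle I anticipate is the bookkeeping in the inductive step: one must carefully track how the weighting exponents $\hat\lambda^{(k-N-1)/2}$ and $\hat\lambda^{(k-N)/2}$ in the bounds of~(\ref{eq:term_constraint_set}) interact with the contraction $\|x^0_{k+1}\|_V \le \hat\lambda^{1/2}\|x^0_N\|_V$ accumulated over $k+1-N$ steps, and confirm that the definition of $\Omega(x^0_{k+1})$ with centre parameter $x^0_{k+1}$ is implied by — rather than merely consistent with — the shifted certificate inherited from $\Omega(x^0_N)$. In particular the term $d_\Theta L\|x^0_N\|_V$ must dominate $d_\Theta L \|x^0_{k+1}\|_V / \hat\lambda^{(k+1-N)/2}$, which holds with equality in the worst case; I would make this precise by noting that all the relevant quantities scale by the same power of $\hat\lambda^{1/2}$ per step, so the inequalities are preserved verbatim after re-indexing. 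The $Hx_k \le h$ conclusion then follows for every $k > N$ as an immediate corollary of membership in $\Omega(x^0_k)$.
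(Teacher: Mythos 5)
Your proposal is correct and follows essentially the same route as the paper's proof: the base case via $\|x^0_N+z_N\|_V+\beta_N\leq\hat\rho$ and the definition of $\hat\rho$ in~(\ref{eq:rho_def}), then a forward induction that reuses the shifted certificate sequence from~(\ref{eq:term_constraint_set}) together with the contraction bounds~(\ref{eq:x0_term_bound})--(\ref{eq:z_term_bound}) and the truncation condition~(\ref{eq:Nc_def}) to extend the certificate by one step. Your version simply spells out the $\hat\lambda$-exponent bookkeeping that the paper leaves implicit.
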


\begin{proof}
Suppose $(\|z_N\|_V,\beta_N)\in\Omega(x^0_N)$, then 
$Hx_N\leq h$ 
for all $x_N\in x^0_N+z_N+\mathcal{E}(V,\beta_N^2)$ 
since ${\|x^0_N + z_N\|_V + \beta_N \leq \hat{\rho}}$. If $\beta_{N+1}$ is equal to the rhs of (\ref{eq:beta_term_bound}), then from (\ref{eq:x0_term_bound}), (\ref{eq:z_term_bound}), (\ref{eq:Nc_def}) we have $(\|z_{N+1}\|_V,\beta_{N+1})\in\Omega(x^0_{N+1})$. 
By induction this argument implies $(\|z_{k}\|_V,\beta_{k})\in \Omega(x^0_k)$ for all $k>N$.
\end{proof}

\begin{remark}
If $\sigma^2/(1-\hat{\lambda}) < \hat{\rho}^2$, 
then 
$\smash{\hat{N}}$ in (\ref{eq:Nc_def}) must be finite 
since
$\beta_{k} = \sigma/\smash{(1-\hat{\lambda})^{\frac{1}{2}}}$ is strictly feasible for~(\ref{eq:Nc_def}) in the limit as $\smash{k\to\infty}$.
The condition $(\|z_N\|_V,\beta_N)\in \Omega(x^0_N)$ introduces $2\smash{\hat{N}}$ second order cone constraints and $\smash{\hat{N}}$ additional scalar variables $\smash{\beta_{N+1},\ldots,\beta_{N+\hat{N}}}$ into Problem~(\ref{opt:mpc}). 
%
Using $(\hat{\lambda}\beta^2 + \sigma^2)^{\frac{1}{2}} \leq \hat{\lambda}^{\frac{1}{2}}\beta + \sigma$, we obtain a sufficient condition for (\ref{eq:Nc_def}):
\begin{align}
\max_{(r,\beta_N)\in\Omega(x^0_N)} & \bigl\{
\smash{\hat{\lambda}^\frac{1}{2}}\beta_{N+\hat{N}} + \sigma 
+ \smash{\hat{\lambda}^\frac{\hat{N}}{2}\!} (r d_{\hat{\Phi}} +  d_\Theta L \|x^0_N\|_V) 
\nonumber \\
&\quad + \smash{\hat{\lambda}^\frac{\hat{N}+1}{2}} (r +  \|x_k^0\|_V \bigr\} 
\leq \hat{\rho} ,
\label{eq:Nc_check}
\end{align}
which can be checked for given $\hat{N}$ by solving a SOCP. 
\end{remark}



The terminal cost $\hat{l}$ and parameter $\hat{\sigma}$ in Problem~(\ref{opt:mpc}) are constructed to ensure closed-loop stability via the condition
\begin{multline}\label{eq:term_cost}
\hat{l}^2(z_N,\beta_N,x^0_N) - \hat{l}^2(z_{N+1},\beta_{N+1},x^0_{N+1}) 
\\
\geq (\|x^0_N+z_N\|_{\hat{Q}} + \beta_N \|V^{-\frac{1}{2}}\|_{\hat{Q}})^2 - \hat{\sigma}^2 .
\end{multline}
In terms of the variables $\beta_{N},\ldots,\beta_{N+\hat{N}}$ in~(\ref{eq:term_constraint_set}), we define
\begin{align}
& 
\hat{l}^2(z_N,\beta_N,x^0_N) = \sum_{k=0}^{\smash{\hat{N}}} 
l_{N+k}^2
\label{eq:hat_l_def}
\\
& l_{N+k} = \begin{cases}
\hat{\lambda}^{\frac{k}{2}}(\|x^0_N\|_V + \|z_N\|_V) + \beta_{N+k} & 0\leq k <\hat{N}
\\
\gamma \hat{\lambda}^{\frac{\hat{N}}{2}}(\|x^0_N\|_V + \|z_N\|_V) + \gamma\beta_{N+\hat{N}} &
k = \hat{N}
\end{cases}
\nonumber
\\
& \hat{\sigma} = 
\gamma \sigma + 
\gamma  \hat{\lambda}^{\frac{\hat{N}}{2}} (d_{\hat{\Phi}} r_{\max} + d_\Theta L\|x^0_N\|_V)
\label{eq:hat_sigma_def}
\end{align}
with 
$\gamma^2 = 1/(1-\hat{\lambda}^{\frac{1}{2}})$
and
$r_{\max} = \max_{(r,\beta)\in\Omega(x^0_N)} r$.

\begin{lemma}\label{lem:term_cost}
  If $\hat{l}$ and $\hat{\sigma}$ are given by (\ref{eq:hat_l_def}), (\ref{eq:hat_sigma_def}), then~(\ref{eq:term_cost}) holds for all $(z_N,\beta_N)$
 satisfying $(\|z_N\|_V,\beta_N)\in\Omega(x^0_N)$.
\end{lemma}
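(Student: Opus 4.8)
The plan is to verify the telescoping inequality \eqref{eq:term_cost} directly from the explicit definitions \eqref{eq:hat_l_def}, \eqref{eq:hat_sigma_def}, using the bounds of Lemma~\ref{lem:term_tube} and Lemma~\ref{lem:termset}. First I would introduce the shorthand $a_k = \hat\lambda^{k/2}(\|x^0_N\|_V + \|z_N\|_V)$ so that $l_{N+k} = a_k + \beta_{N+k}$ for $0\le k < \hat N$ and $l_{N+\hat N} = \gamma(a_{\hat N} + \beta_{N+\hat N})$. From \eqref{eq:x0_term_bound} and \eqref{eq:z_term_bound} (with $v_k = v^0_k = 0$, so the exponent $i$ in \eqref{eq:z_term_bound} is $1$) we have $\|x^0_{N+1}\|_V + \|z_{N+1}\|_V \le \hat\lambda^{1/2}(\|x^0_N\|_V + \|z_N\|_V)$, i.e.\ the sequence $a_k$ dominates the corresponding terms evaluated at shifted times; this is what makes $\hat l^2(z_{N+1},\beta_{N+1},x^0_{N+1})$ bounded above by $\sum_{k=1}^{\hat N} l_{N+k}^2$ plus the geometric tail correction absorbed by the factor $\gamma$. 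The key computation is then to expand $\hat l^2(z_N,\cdot) - \hat l^2(z_{N+1},\cdot)$ as $l_N^2$ minus the tail terms that fail to telescope, and to bound those residual terms.

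Concretely, I would show $\hat l^2(z_{N+1},\beta_{N+1},x^0_{N+1}) \le \sum_{k=1}^{\hat N-1}(a_k + \beta_{N+k})^2 + \gamma^2(a_{\hat N} + \beta_{N+\hat N})^2$, where the last term arises because at the shifted horizon the ``$k=\hat N-1$ slot" of $\hat l(z_{N+1},\cdot)$ plus its own $k=\hat N$ slot must together be dominated by $\gamma^2$ times the old $k=\hat N$ term; this is exactly where $\gamma^2 = 1/(1-\hat\lambda^{1/2})$ is needed, since a geometric series with ratio $\hat\lambda^{1/2}$ of terms of the form $(a_k+\beta_{N+k})$ sums to at most $\gamma^2$ times its leading term once the $\beta$-recursion \eqref{eq:beta_term_bound} and the simplification $(\hat\lambda\beta^2+\sigma^2)^{1/2}\le \hat\lambda^{1/2}\beta + \sigma$ are used. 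Subtracting, the difference collapses to $l_N^2 = (a_0 + \beta_N)^2 = (\|x^0_N\|_V + \|z_N\|_V + \beta_N)^2$, up to a correction of the form $(\sigma + \hat\lambda^{\hat N/2}(d_{\hat\Phi} r + d_\Theta L\|x^0_N\|_V))$ squared (scaled by $\gamma^2$) coming from the non-homogeneous parts of the $\beta$-recursion and the tail truncation at $N+\hat N$; by \eqref{eq:hat_sigma_def} and $r \le r_{\max}$ this correction is at most $\hat\sigma^2$.

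It then remains to check that $(\|x^0_N\|_V + \|z_N\|_V + \beta_N)^2 \ge (\|x^0_N + z_N\|_{\hat Q} + \beta_N\|V^{-1/2}\|_{\hat Q})^2$, which holds because a Schur complement of \eqref{eq:cost_lmi} gives $V \succeq \hat Q$, hence $\|y\|_{\hat Q} \le \|y\|_V$ for any $y$ and $\|V^{-1/2}\|_{\hat Q} \le 1$, combined with the triangle inequality $\|x^0_N + z_N\|_V \le \|x^0_N\|_V + \|z_N\|_V$. I expect the main obstacle to be the bookkeeping in the second step: correctly accounting for how the two cases in the definition \eqref{eq:hat_l_def} interact under the time shift, and verifying that the geometric tail generated by the inhomogeneous $\beta$-recursion is exactly captured by $\gamma$ and $\hat\sigma$ rather than leaving an uncontrolled remainder. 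A careful induction-style estimate on the partial sums $\sum_{k=m}^{\hat N} l_{N+k}^2$, peeling off one term at a time using \eqref{eq:beta_term_bound} and the $a_k$ bound, should make this transparent; everything else is triangle inequalities and the single LMI consequence $V\succeq\hat Q$.
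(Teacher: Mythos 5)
Your proposal follows essentially the same route as the paper's (very terse) proof: substitute the definitions \eqref{eq:hat_l_def}--\eqref{eq:hat_sigma_def} into \eqref{eq:term_cost}, telescope using the decay bounds \eqref{eq:x0_term_bound}--\eqref{eq:beta_term_bound}, and absorb the inhomogeneous tail via the geometric factor $\gamma^2 = 1/(1-\hat\lambda^{1/2})$ --- your combination of $(\hat\lambda\beta^2+\sigma^2)^{1/2}\le\hat\lambda^{1/2}\beta+\sigma$ with the geometric sum is exactly the role played by the paper's scalar inequality $(\hat\lambda^{1/2}a+b)^2\le\hat\lambda^{1/2}a^2+b^2/(1-\hat\lambda^{1/2})$, and your final step via $V\succeq\hat Q$ is the consequence of \eqref{eq:cost_lmi} the paper invokes implicitly. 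The bookkeeping concern you flag (whether the tail residual is captured by $\hat\sigma$ exactly) is precisely the point the paper's one-line proof also glosses over, so your sketch is faithful to, and no less complete than, the published argument.
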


\begin{proof}
This results from substituting (\ref{eq:hat_l_def})-(\ref{eq:hat_sigma_def}) into (\ref{eq:term_cost}), and using (\ref{eq:x0_term_bound})-(\ref{eq:beta_term_bound}) and the following inequality, which holds for any scalars $a,b,\hat{\lambda}>0$:
$(\smash{\hat{\lambda}^{\frac{1}{2}}}a + b)^2 \leq 
\hat{\lambda}^{\frac{1}{2}} a^2 + b^2/(1-\hat{\lambda}^{\frac{1}{2}})$.
\end{proof}%

A procedure for computing the parameters defining the terminal cost and constraint set is summarised in Algorithm~\ref{alg:term}.%

{\setlength{\algomargin}{1em}
\removelatexerror
\begin{algorithm2e}
\DontPrintSemicolon
\SetKwInOut{Input}{Input}
\SetKwInOut{Output}{Output}
\Input{Bounds $\hat{\mathcal{X}}, \hat{\mathcal{U}}, \Theta_0$, matrices $\hat{A}^{(j)},\hat{B}^{(j)}$ in~(\ref{eq:ldi_term}); 
disturbance, state and control sets $\mathcal{W}$, $\mathcal{X}$, $\mathcal{U}$; 
scalar $\hat{\rho}$ in~(\ref{eq:rho_def}); 
cost weights $Q$, $R$}
\Output{$V$, $\sigma$, $K$, $\hat{\lambda}$, $\gamma$, $\hat{N}$, $\hat{\sigma}$}
{
At time $t=0$: Solve $(S^\star,Y^\star, \tau^\star) = \arg\min_{S,Y,\tau} \tau$ s.t.~(\ref{eq:cost_lmi}) and set $V\gets (S^\star)^{-1}$, $K\gets Y^\star V$, $\sigma^2 \gets \tau^\star$, 
$\hat{\lambda} \gets 1 - \sigma_{\min} ( V^{-\frac{1}{2}} \hat{Q} V^{-\frac{1}{2}} )$,
$\gamma \gets 2\|V^{-\frac{1}{2}}\|_{\hat{Q}}^2/(1-\hat{\lambda}^{\frac{1}{2}\!})$\;
At times $t\geq 0$ in steps 4 and 14 of Alg.~\ref{alg:mpc}: 
Set $\hat{N}$ to a prior estimate (e.g.~$\hat{N}\gets 1$) and check (\ref{eq:Nc_check}); increase $\hat{N}$ until~(\ref{eq:Nc_check}) is satisfied; 
compute $\hat{\sigma}$ using (\ref{eq:hat_sigma_def})
}
\caption{Computation of terminal parameters}\label{alg:term}
\end{algorithm2e}}


\section{Recursive feasibility and stability}\label{sec:stability}

This section considers the closed-loop properties of the control law proposed in Algorithms~\ref{alg:mpc} and \ref{alg:term}. 
If the computation in Algorithm~\ref{alg:term} at time $t=0$ is feasible and if there exists a control perturbation sequence $\mathbf{v}^0$ such that Algorithm~\ref{alg:mpc} is feasible at $t=0$, then the system~($\ref{eq:system}$) with the control law of Algorithm~\ref{alg:mpc} can be shown to robustly satisfy the constraints $x_t\in\mathcal{X}$ and $u_t\in\mathcal{U}$ at all times $t \geq 0$. In addition, for the closed-loop system the asymptotic time-average of the stage cost  $\lVert x_t \rVert^2_Q +  \lVert u_t \rVert^2_R$ does not exceed the time-average of $\hat{\sigma}_t$.


We first show that Problem~(\ref{opt:mpc}) in Alg.~\ref{alg:mpc} is feasible at each time step using an inductive argument considering the feasibility of (\ref{opt:mpc}) in iteration $i+1$ at time $t$ assuming feasibility in iteration $i$ at time $t$, and the feasibility of (\ref{opt:mpc}) in the first iteration at time $t+1$ assuming feasibility in the final iteration of time $t$. In the latter case we use the following observation. 

\begin{lemma}\label{lem:beta_dynamics_term_bound}
If 
$x^0_k + \mathcal{S} \subseteq \hat{\mathcal{X}}$, $v^0_k=0$ and $Kx^0_k + K \mathcal{S} \subseteq \hat{\mathcal{U}}$, then 
\begin{multline}\label{eq:beta_dynamics_term_bound}
(\lambda_k \beta_k^2 + \sigma^2)^{\frac{1}{2}} + \max_{j\in\mathbb{N}_{\nu_1}, q\in\mathbb{N}_{\nu_\theta}} \lVert C^{(j)}_k z_k + \delta^{0\,(q)}_k \rVert_V\\
\leq
(\hat{\lambda}\beta_k^2 + \sigma^2)^{\frac{1}{2}} + d_{\hat{\Phi}} \|z_k\|_V + d_\Theta \| x^0_k \|_V
\end{multline}
for all $z_k\in\mathbb{R}^{n_x}$\!, $\beta_k\in\mathbb{R}$,  where $\lambda_k$, $\hat{\lambda}$ are given by (\ref{eq:lambda_mode1_def}), (\ref{eq:lambda_term}).
\end{lemma}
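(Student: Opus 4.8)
Looking at Lemma~\ref{lem:beta_dynamics_term_bound}, I need to prove that a bound derived from the linearization-error machinery (left side, involving $\lambda_k$, $C_k^{(j)}$, $\delta_k^{0\,(q)}$) is dominated by the terminal-mode bound (right side, involving $\hat\lambda$, $d_{\hat\Phi}$, $d_\Theta$) when the relevant trajectory point and its perturbation set lie in the LDI validity region $\hat{\mathcal X}\times\hat{\mathcal U}$ with $v_k^0=0$.

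My plan would be to bound the two sides term by term. First, I would argue that under the hypotheses $x_k^0+\mathcal S\subseteq\hat{\mathcal X}$, $Kx_k^0+K\mathcal S\subseteq\hat{\mathcal U}$, and $v_k^0=0$, the Jacobian matrices $\Phi_k=\nabla_x f_K(x_k^0,0,\theta^0)$ and $\Phi_k+C_k^{(j)}$ (which represents $\nabla_x f_K$ evaluated at perturbed arguments inside $\mathcal S\times\mathcal V$, per \eqref{eq:mvt_delta1}) all lie in $\mathrm{co}\{\hat\Phi^{(r)}=\hat A^{(r)}+\hat B^{(r)}K\}$ by the LDI~\eqref{eq:ldi_term} specialized to $u=Kx$. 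The key consequence, exactly as used in the proof of Lemma~\ref{lem:term_tube}, is that \eqref{eq:cost_lmi} forces $\|\hat\Phi^{(r)} V^{-1/2}\|^2 \le \hat\lambda$ in the appropriate sense; more precisely, for any such matrix $M\in\mathrm{co}\{\hat\Phi^{(r)}\}$ one has $M^\top V M \preceq \hat\lambda V$, and combined with the disturbance term, $\|M V^{-1/2}\|_{\Psi^{(r)}}^2 \le \hat\lambda$. Taking the max defining $\lambda_k$ in~\eqref{eq:lambda_mode1_def} then gives $\lambda_k\le\hat\lambda$, hence $(\lambda_k\beta_k^2+\sigma^2)^{1/2}\le(\hat\lambda\beta_k^2+\sigma^2)^{1/2}$ (monotonicity in $\lambda_k$), which handles the first term on each side.

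Next I would bound $\|C_k^{(j)} z_k\|_V$. Since $C_k^{(j)}$ arises as $\nabla_x f_K(\text{perturbed}) - \Phi_k$ and both $\nabla_x f_K(\text{perturbed})$ and $\Phi_k$ lie in $\mathrm{co}\{\hat\Phi^{(r)}\}$, the difference $C_k^{(j)}$ lies in the difference set $\mathrm{co}\{\hat\Phi^{(r)}\} - \mathrm{co}\{\hat\Phi^{(r)}\}$, so $\|C_k^{(j)} z_k\|_V \le d_{\hat\Phi}\|z_k\|_V$ by definition~\eqref{eq:Phi_diam} of $d_{\hat\Phi}$. For the zero-order term, $\delta_k^{0\,(q)}$ is a vertex of $\mathcal W_k^0 = \{f_K(x_k^0,0,\theta-\theta^0):\theta\in\Theta\}$ (using $v_k^0=0$), and since $f_{K,i}(\cdot,0)$ is $L$-Lipschitz with $f_{K,i}(0,0)=0$, we get $\|\delta_k^{0\,(q)}\|_V \le L\|\theta-\theta^0\|_1\|x_k^0\|_V \le d_\Theta L\|x_k^0\|_V$ — matching the proof of Lemma~\ref{lem:term_tube}. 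Then the triangle inequality gives $\|C_k^{(j)}z_k + \delta_k^{0\,(q)}\|_V \le d_{\hat\Phi}\|z_k\|_V + d_\Theta L\|x_k^0\|_V$ uniformly over $j,q$, and summing with the first-term bound yields the claimed inequality. (I note the RHS of~\eqref{eq:beta_dynamics_term_bound} as written omits the factor $L$; I would flag this as matching $d_\Theta L$ from~\eqref{eq:beta_term_bound} or treat $L$ as absorbed, depending on the paper's convention.)

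The main obstacle will be justifying cleanly that $\Phi_k+C_k^{(j)}\in\mathrm{co}\{\hat\Phi^{(r)}\}$: this requires that the constructed polytopic bound $\{C_k^{(j)},D_k^{(j)}\}$ on the Jacobian variation over $\mathcal S\times\mathcal V\times\Theta$ is genuinely contained in the LDI cone when $x_k^0+\mathcal S\subseteq\hat{\mathcal X}$ and $Kx_k^0+K\mathcal S\subseteq\hat{\mathcal U}$ — i.e.\ that the local linearization-error construction of Section~\ref{sec:error_bounds} is consistent with the global LDI of Section~\ref{sec:termset} on the overlap region. This is essentially a containment-of-enclosures argument: both sets are built from the same componentwise Jacobian bounds, evaluated over nested argument sets, so the finer (local, but over a region contained in the LDI region) enclosure is contained in the aggregate LDI image. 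I would make this precise by noting that $\nabla_x f_K(x_k^0+s, v_k^0+v,\theta) = \nabla_x f(x_k^0+s, K(x_k^0+s)+v,\theta) + \nabla_u f(\cdot)K \in \mathrm{co}\{\hat A^{(r)}+\hat B^{(r)}K\}$ for $(x_k^0+s, K(x_k^0+s)+v)\in\hat{\mathcal X}\times\hat{\mathcal U}$, which holds precisely under the stated hypotheses, and that $\Phi_k$ is the special case $s=0$, $v=0$; everything else is the triangle inequality and monotonicity of $t\mapsto(\lambda t+\sigma^2)^{1/2}$ in $\lambda$.
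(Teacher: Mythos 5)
Your proof follows essentially the same route as the paper's: establish $\mathrm{co}\{\Phi_k+C_k^{(j)}\}\subseteq\mathrm{co}\{\hat\Phi^{(r)}\}$ under the stated containment hypotheses, deduce $\lambda_k\le\hat\lambda$ by comparing the LMI~(\ref{eq:lambda_mode1_cond}) with its terminal-mode counterpart derived from~(\ref{eq:cost_lmi}), and then bound $\|C_k^{(j)}z_k+\delta_k^{0\,(q)}\|_V$ term by term via $d_{\hat\Phi}$ and the Lipschitz/diameter argument for $\delta^0$. Your flag about the missing factor $L$ on the $d_\Theta\|x_k^0\|_V$ term is well taken --- the paper's own statement and proof of this lemma omit it, even though the parallel bound~(\ref{eq:beta_term_bound}) and the Lipschitz derivation both require $d_\Theta L\|x_k^0\|_V$.
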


\begin{proof}
From (\ref{eq:lambda_term}) we have
$V - \smash{\frac{1}{\gamma}(Q+K^\top R K)} \preceq \smash{\hat{\lambda}}V$,
so~(\ref{eq:cost_lmi}) implies, for all $j\in \mathbb{N}_{\hat{\nu}}$ and $r \in \mathbb{N}_{\nu_w}$
\begin{equation}\label{eq:lambda_mode1_cond_term}
\begin{bmatrix}
\hat{\lambda} V & 0 & \hat{\Phi}{\mbox{}^{(j)}}^\top
\\
\ast & \sigma^2 & {w^{(r)}}^\top
\\
\ast & \ast & V^{-1}
\end{bmatrix} \succeq 0 .
\end{equation}
Furthermore, if $x^0_k + \mathcal{S} \subseteq \hat{\mathcal{X}}$, $v^0_k=0$ and ${Kx^0_k + K\mathcal{S}\subseteq \hat{\mathcal{U}}}$, then
$\mathrm{co}\{ \hat{\Phi}^{(j)}, \, j \in \mathbb{N}_{\hat{\nu}} \} \supseteq \mathrm{co}\{ \Phi_k + C_k^{(j)}, \, j\in\mathbb{N}_{\nu_1} \}$. Comparing  
(\ref{eq:lambda_mode1_cond_term}) and (\ref{eq:lambda_mode1_cond}) yields
$\hat{\lambda} \geq \lambda^{(j,r)}$ and 
$\hat{\lambda} \geq \lambda_k$. Since $d_{\hat{\Phi}} \|z_k\|_V + d_\Theta \|x^0_k\|_V \geq \max_{l, q} \lVert C^{(l)}_k z_k + \delta^{0\,(q)}_k \rVert_V$ we obtain (\ref{eq:lambda_mode1_cond_term}).
%
\end{proof}

\begin{theorem}\label{thm:feasibility}
If at time $t=0$, $\mathbf{v}^0$ and $x^0_0=x^p_0$ generate a nominal trajectory such that Problem~(\ref{opt:mpc}) is feasible, then 
in any iteration $i>1$ of Algorithm~\ref{alg:mpc} at $t\geq 0$, Problem~(\ref{opt:mpc}) is feasible with
$\mathbf{v}^0 = \mathbf{v}^0_{\mathrm{old}}$,
and in iteration $i=1$ at any time $t>0$, Problem~(\ref{opt:mpc}) is feasible if  $\mathbf{v}^0 = \mathbf{v}^0_{\mathrm{old}}$ and $x^0_0 = x^0_{0,\mathrm{old}}$.
\end{theorem}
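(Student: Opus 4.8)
The plan is to argue by induction on iterations within a time step and then across time steps, exhibiting an explicit feasible point for Problem~(\ref{opt:mpc}) in each case. The base case is the hypothesis at $t=0$, $i=1$. There are then two inductive steps to handle, matching the two clauses of the theorem statement.

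\emph{Iteration-to-iteration step at fixed $t$.} Suppose Problem~(\ref{opt:mpc}) is feasible in iteration $i\geq 1$, with optimizer $(\mathbf{v}^\star,\boldsymbol{\beta}^\star,\mathbf{z}^\star,\mathbf{l}^\star)$ and nominal sequence $\mathbf{v}^0$. At the start of iteration $i+1$ the algorithm sets $\mathbf{v}^0_{\mathrm{old}}\gets\mathbf{v}^0$ and $\mathbf{v}^0\gets\mathbf{v}^0+\mathbf{v}^\star$, so the new nominal trajectory $\mathbf{x}^0$ is exactly the trajectory that the old iterate's tube was centred on after adding $\mathbf{z}^\star$. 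The claim is that $\mathbf{v}=0$ (i.e.\ $\mathbf{v}^0=\mathbf{v}^0_{\mathrm{old}}$ in the theorem's phrasing; here $v_k=0$ for all $k$), together with $z_k$, $\beta_k$, $l_k$ inherited from the previous iterate's $z^\star_k$, $\beta^\star_k$, $l^\star_k$ (and the previous nominal $x^0_k$), is feasible for the re-linearized problem. The key observations: since $\mathbf{x}^0$ is now the previously predicted trajectory and the constraints (state, control, $\mathcal{S}$, $\mathcal{V}$, terminal) are expressed in terms of $x^0_k+z_k+\mathcal{E}(V,\beta_k^2)$ etc., these are satisfied with the shifted variables; the $\beta$-recursion~(\ref{eq:beta_mode1_dynamics}) and the cost bound on $l_k$ hold because the new $\Phi_k,B_k,\lambda_k,\mathcal{W}^0_k,\mathcal{W}^1_k$ are the linearization data at precisely this trajectory — one has to check that $\delta^1_k=0$ is an admissible realization at $s_k=z_k$, $v_k=0$ when $z_k+\mathcal{E}(V,\beta_k^2)\subseteq\mathcal{S}$, which is exactly the content of the tube membership condition re-evaluated at the new linearization point. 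Finally the iteration-$i{>}1$ cost-decrease constraint $\bar J^{(i+1)}_t\leq\bar J^{(i)}_t$ is met with equality by this candidate.

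\emph{Time-step-to-time-step step.} Suppose feasibility holds at the final iteration of time $t$, with optimizer $(\mathbf{v}^\star,\ldots)$ and nominal $\mathbf{x}^0$. The algorithm applies $u_t=Kx^p_t+v^0_0$ and shifts: $\mathbf{v}^0\gets\{v^0_1,\ldots,v^0_{N-1},0\}$, $\mathbf{x}^0_{\mathrm{old}}\gets\{x^0_1,\ldots,x^0_N,f_K(x^0_N,0,\theta^0)\}$. With $x^p_{t+1}=f(x^p_t,u_t,\theta)+w_t$, the candidate for iteration $1$ at $t+1$ is the standard shifted tube: $z_k^{\mathrm{new}}=z^\star_{k+1}$, $v_k^{\mathrm{new}}=v^\star_{k+1}$ for $k<N-1$, appended with the terminal mode $v=0$; and for $\beta$ one propagates $\beta^\star_1,\ldots,\beta^\star_N$ and then extends by one step using~(\ref{eq:beta_term_bound}). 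Feasibility of the initial condition $\beta_0\geq\|x^0_0+z_0-x^p_{t+1}\|_V$ follows because at $t$ the tube satisfied $x^p_{t+1}\in x^0_1+z^\star_1+\mathcal{E}(V,(\beta^\star_1)^2)$ (this is where the actual disturbance $w_t$, $\delta^0$, $\delta^1$ realizations being contained in $\mathcal{W},\mathcal{W}^0_0,\mathcal{W}^1_0$ is used, via~(\ref{eq:tube_mem_cond})). The appended terminal step lies in the terminal set by Lemma~\ref{lem:termset}, and the $\beta$-recursion on the appended segment holds by Lemma~\ref{lem:beta_dynamics_term_bound}, which shows the mode-1 recursion~(\ref{eq:beta_mode1_dynamics}) is implied by the terminal recursion~(\ref{eq:beta_term_bound}) once $x^0_N+\mathcal{S}\subseteq\hat{\mathcal{X}}$ etc. — these set inclusions must be verified to hold along the terminal trajectory, using the definition~(\ref{eq:rho_def}) of $\hat{\rho}$ and the terminal-set constraints. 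The $l_k$ inequalities for $k<N$ follow from the previous iterate's; the terminal cost inequality $l_N\geq\hat l(z_N,\beta_N,x^0_N)$ follows from Lemma~\ref{lem:term_cost}. Finally the $i=1$ cost-decrease constraint $\bar J^{(1)}_{t+1}\leq\bar J^{(\mathit{final})}_t-(\|x_t\|_Q^2+\|u_t\|_R^2-\hat\sigma^2)$ is verified by telescoping: the candidate's objective $\sum l_k^2$ differs from $\bar J^{(\mathit{final})}_t$ by dropping the $k=0$ term and adding the new terminal contribution, and Lemma~\ref{lem:term_cost} (inequality~(\ref{eq:term_cost})) bounds exactly this difference by $\|x^0_N+z_N\|_{\hat Q}^2+\ldots-\hat\sigma^2$, while the dropped term dominates $\|x_t\|_Q^2+\|u_t\|_R^2$ up to the $\beta_0\|V^{-1/2}\|_{\hat Q}$ slack built into the $l_0$ constraint.

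The main obstacle I expect is the bookkeeping around the line search and the re-linearization in the iteration-to-iteration step: one must be careful that after updating $\mathbf{v}^0\gets\mathbf{v}^0+\mathbf{v}^\star$ the recomputed Jacobians and error sets $\mathcal{W}^0_k,\mathcal{W}^1_k$ genuinely admit the zero-perturbation trajectory, i.e.\ that the previous solution's tube is still a valid tube for the new linearization — this is essentially the statement that the tube membership conditions~(\ref{eq:tube_mem_cond})–(\ref{eq:beta_mode1_dynamics}) are "self-consistent" at the predicted trajectory, and it relies on $\delta^1_k=0$ being in $\mathcal{W}^1_k$ when evaluated at $(s_k,v_k)=(z_k,0)$, which holds because $\mathcal{W}^1_k$ is a convex set containing the origin whenever $z_k+\mathcal{E}(V,\beta_k^2)\subseteq\mathcal{S}$. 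A secondary subtlety is that when the line search triggers (Problem infeasible), the theorem's feasibility claim is vacuous for that call but the algorithm's fallback $\mathbf{v}^\star\gets0$ keeps $\mathbf{v}^0=\mathbf{v}^0_{\mathrm{old}}$, so the invariant "a feasible $(\mathbf{v}^0_{\mathrm{old}},x^0_{0,\mathrm{old}})$ is always available" is maintained; I would state this invariant explicitly and carry it through both inductive steps.
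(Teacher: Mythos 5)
Your time-step-to-time-step argument is essentially the paper's: the shifted-tube candidate, the initial-condition containment via the one-step-ahead cross section, the terminal extension via Lemmas~\ref{lem:term_tube}--\ref{lem:beta_dynamics_term_bound}, and the telescoping cost bound all match the intended proof. The gap is in your iteration-to-iteration step, which proves the wrong statement and whose candidate point is infeasible in general. The theorem's first clause asserts feasibility of Problem~(\ref{opt:mpc}) at iteration $i+1$ \emph{when the nominal perturbation sequence is reset to $\mathbf{v}^0_{\mathrm{old}}$}, i.e.\ when the linearization is taken about the same trajectory as at iteration $i$. That instance is identical to the iteration-$i$ instance except for the new cost constraint $\bar J^{(i+1)}_t\leq\bar J^{(i)}_t$, which the iteration-$i$ optimizer satisfies with equality; the claim is therefore immediate. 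What you instead try to show is that the problem re-linearized about the \emph{updated} nominal $\mathbf{v}^0+\mathbf{v}^\star$ admits the inherited candidate $(v_k,z_k,\beta_k)=(0,z^\star_k,\beta^\star_k)$. That is not what the theorem claims, and it is false in general --- it is precisely the failure mode that the backtracking line search in lines 6--21 of Algorithm~\ref{alg:mpc} exists to handle (the paper states explicitly that the linearized dynamics determined in line 4 may not define a feasible set of constraints).

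Concretely, two steps of your construction break. First, the new nominal trajectory is generated by propagating $\mathbf{v}^0+\mathbf{v}^\star$ through the nonlinear map~(\ref{eq:nominal_system}), so $x^{0,\mathrm{new}}_{k+1}=f_K(x^{0,\mathrm{new}}_k,v^0_k+v^\star_k,\theta^0)\neq x^0_{k+1}+z^\star_{k+1}$; your premise that the new nominal trajectory is exactly the old one shifted by $\mathbf{z}^\star$ ignores the linearization error. Second, even granting that, the inherited $z^\star_k$ do not satisfy the new equality constraints $z_{k+1}=\Phi_k z_k+B_k v_k$ with $v_k=0$ and the recomputed Jacobians, and the recursion~(\ref{eq:beta_mode1_dynamics}) must hold with the recomputed $\lambda_k$, $C^{(j)}_k$, $D^{(j)}_k$, $\delta^{0\,(q)}_k$, all of which can change when the linearization point moves; the observation that $0\in\mathcal{W}^1_k$ does not control these terms. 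The fix is to prove the clause as stated (revert to $\mathbf{v}^0_{\mathrm{old}}$, which trivially reproduces the just-solved feasible instance) and to promote your closing remark about the line-search fallback from a ``secondary subtlety'' to a full case of the induction: whenever the re-linearized problem is infeasible, the guaranteed feasibility at $(\mathbf{v}^0_{\mathrm{old}},x^0_{0,\mathrm{old}})$ is exactly what ensures the line search (or its fallback) terminates with a feasible linearization, so that lines 22 and 24 propagate the invariant.
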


\begin{proof}
This follows by induction from the following three cases. 
(i) If, in  iteration $\iter$ at time $t$, $\mathbf{v}^0$ and $x^0_0$ generate a nominal trajectory such that Problem~(\ref{opt:mpc}) in line~5 of Algorithm~\ref{alg:mpc} is feasible, then line~22 trivially ensures that~(\ref{opt:mpc}) is feasible in iteration $i+1$ time $t$ with $\mathbf{v}^0 = \mathbf{v}^0_{\mathrm{old}}$.
(ii) If, in the final iteration of time $t$, $\mathbf{v}^0$ and $x^0_0$ generate a nominal trajectory such that Problem~(\ref{opt:mpc}) in line~5 is feasible, then line~24 ensures 
(due to the definition of $\Omega$ and 
Lemmas~\ref{lem:term_tube}, \ref{lem:termset}, \ref{lem:term_cost}, and \ref{lem:beta_dynamics_term_bound})
that $\mathbf{v}^0=\mathbf{v}^0_{\mathrm{old}}$ and $x^0_0 = x^0_{0,\mathrm{old}}$ 
generate a nominal trajectory in iteration $i=1$ time $t+1$ such that Problem~(\ref{opt:mpc}) is feasible.
(iii) If, in any iteration $\iter$ and time $t$, Problem~(\ref{opt:mpc}) in line~5 is infeasible, then the feasibility of $\mathbf{v}^0 =\mathbf{v}^0_{\mathrm{old}}$ (or $\mathbf{v}^0 =\mathbf{v}^0_{\mathrm{old}}$ and $x^0_0=x^0_{0,\mathrm{old}}$ if $i=1$) implies that the line search in lines 8-21 necessarily terminates with $\mathbf{v}^0$ and $x^0_0$ that generate a nominal trajectory for which Problem~(\ref{opt:mpc}) is feasible, and hence feasibility of $\mathbf{v}^0 = \mathbf{v}^0_{\mathrm{old}}$ (or $\mathbf{v}^0 =\mathbf{v}^0_{\mathrm{old}}$ and $x^0_0=x^0_{0,\mathrm{old}}$) is ensured in iteration $i+1$ at time $t$ (or iteration $i=1$ at time $t+1$) by line~22 (or line~24, respectively).
\end{proof}

\begin{theorem}\label{thm:cost_bound}
If Problem~(\ref{opt:mpc}) is feasible at $t=0$, then the system~(\ref{eq:system}) with Algorithm~\ref{alg:mpc} satisfies $x_t\in \mathcal{X}$, $u_t\in\mathcal{U}$ and 
\begin{equation}\label{eq:closedloop_cost}
\limsup_{T\rightarrow \infty} \frac{1}{T}\sum_{t=0}^{T-1} (\left\lVert x_t \right\rVert^2_Q +  \left\lVert u_t \right\rVert^2_R)\leq \bar{\sigma}^2 
\end{equation}
where $\bar{\sigma} = \gamma \sigma + 
\gamma \hat{\rho} (d_{\hat{\Phi}} + d_\Theta L)$.

\end{theorem}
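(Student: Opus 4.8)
The plan is to combine the recursive feasibility established in Theorem~\ref{thm:feasibility} with a Lyapunov-type argument in which the optimal objective $\bar J^{(\mathit{final})}_t$ of the final iteration of Problem~\eqref{opt:mpc} at time $t$ plays the role of the value function. Throughout, ``feasible at time $t$'' means that the algorithm (after the line search of Theorem~\ref{thm:feasibility}, if invoked) produces a solution of~\eqref{opt:mpc} for the nominal trajectory it actually linearises about.

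First I would establish robust constraint satisfaction by induction on $t$. By Theorem~\ref{thm:feasibility}, Problem~\eqref{opt:mpc} is feasible at every time step, so $u_t = Kx^p_t + v^0_0$ is well defined; for that feasible solution the initial constraint $\beta_0 \geq \|x^0_0 + z_0 - x^p_t\|_V$ places $x^p_t$ in the cross section $x^0_0 + z_0 + \mathcal{E}(V,\beta_0^2)$, while the problem enforces $x^0_0 + z_0 + \mathcal{E}(V,\beta_0^2)\subseteq\mathcal{X}$ and $K\bigl(x^0_0+z_0+\mathcal{E}(V,\beta_0^2)\bigr) + v^0_0 + v_0 \subseteq \mathcal{U}$, hence $x^p_t\in\mathcal{X}$ and $u_t\in\mathcal{U}$ (this holds regardless of the line-search modifications of $x^0_0$, since the $\beta_0$-constraint is always imposed). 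Writing $x^p_t = x^0_0 + z_0 + e_0$ with $e_0\in\mathcal{E}(V,\beta_0^2)$ and substituting into~\eqref{eq:taylor}, the realised next state equals $x^0_1 + z_1 + e_1$ with $z_1,e_1$ evolving by~\eqref{eq:z_dynamics}--\eqref{eq:e_dynamics}; Lemma~\ref{lem:beta_mode1_dynamics} together with the $\beta$-recursion and the constraints $\mathcal{S}\supset z_k+\mathcal{E}(V,\beta_k^2)$, $\mathcal{V}\ni v^0_k+v_k$ guarantees $e_k\in\mathcal{E}(V,\beta_k^2)$ for every admissible $w_k,\delta^0_k,\delta^1_k$, which closes the induction (the successor problem is re-initialised with $x^0_0\gets x^p_{t+1}$, so its own $\beta_0$-constraint again certifies $x^p_{t+1}\in\mathcal{X}$).

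Second I would derive a one-step cost inequality. Monotonicity over iterations, $\bar J^{(i)}_t\le\bar J^{(i-1)}_t$, gives $\bar J^{(\mathit{final})}_t \le \bar J^{(1)}_t$, and the iteration-$1$ constraint of~\eqref{opt:mpc} then yields $\bar J^{(\mathit{final})}_t \le \bar J^{(\mathit{final})}_{t-1} - \bigl(\|x_{t-1}\|_Q^2 + \|u_{t-1}\|_R^2 - \hat\sigma_{t-1}^2\bigr)$ --- precisely the constraint that the shifted candidate of Theorem~\ref{thm:feasibility} satisfies by virtue of Lemmas~\ref{lem:term_tube}--\ref{lem:term_cost}. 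Next I would bound $\hat\sigma_t$ uniformly: feasibility of the terminal constraint $(\|z_N\|_V,\beta_N)\in\Omega(x^0_N)$ forces $\Omega(x^0_N)\neq\varnothing$, so~\eqref{eq:term_constraint_set} with $\beta_N\ge 0$ and $r=\|z_N\|_V\ge 0$ gives $\|x^0_N\|_V\le\hat\rho$ and $r_{\max}=\max_{(r,\beta)\in\Omega(x^0_N)}r\le\hat\rho-\|x^0_N\|_V\le\hat\rho$; since also $\hat\lambda^{\hat N/2}\le 1$, definition~\eqref{eq:hat_sigma_def} yields $\hat\sigma_t\le\gamma\sigma+\gamma\hat\rho(d_{\hat\Phi}+d_\Theta L)=\bar\sigma$ for all $t$.

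Finally, telescoping the one-step inequality from $t=1$ to $T$ and using $\bar J^{(\mathit{final})}_T=\sum_k l_k^2\ge 0$ gives $\sum_{t=0}^{T-1}(\|x_t\|_Q^2+\|u_t\|_R^2)\le\bar J^{(\mathit{final})}_0+\sum_{t=0}^{T-1}\hat\sigma_t^2\le\bar J^{(\mathit{final})}_0+T\bar\sigma^2$; dividing by $T$ and letting $T\to\infty$ gives~\eqref{eq:closedloop_cost}. I expect the main obstacle to be the second step: pinning down the exact time index carried by $\hat\sigma$ in the iteration-$1$ cost constraint and confirming that the shifted candidate of Theorem~\ref{thm:feasibility} indeed certifies that constraint, and then the uniform bound $\hat\sigma_t\le\bar\sigma$, which rests on the slightly delicate observations that $\Omega(x^0_N)$ is nonempty along the closed loop and that its definition confines both $\|x^0_N\|_V$ and $r_{\max}$ to $[0,\hat\rho]$.
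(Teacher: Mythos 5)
Your proposal is correct and follows essentially the same route as the paper: recursive feasibility (Theorem~\ref{thm:feasibility}) plus the cost-decrease constraints of Problem~\eqref{opt:mpc} give the one-step inequality $\bar{J}^{(\mathit{final})}_{t} - \bar{J}^{(\mathit{final})}_{t+1} \geq \lVert x_{t} \rVert^2_Q + \lVert u_{t} \rVert^2_R - \bar{\sigma}^2$, which is then summed over $t$. You additionally spell out the constraint-satisfaction induction and the bound $\hat{\sigma}\leq\bar{\sigma}$ (via $r_{\max}+\|x^0_N\|_V\leq\hat{\rho}$ and $\hat{\lambda}^{\hat{N}/2}\leq 1$), which the paper asserts without detail; these additions are consistent with its argument.
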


\begin{proof}
This follows from Theorem~\ref{thm:feasibility}, the constraints on $J_t^{(i)}$ in Problem~(\ref{opt:mpc}), and $\hat{\sigma} \leq \bar{\sigma}$, which implies, for all $t\geq 0$,
\begin{equation}\label{eq:cost_decrease}
\bar{J}^{(\mathit{final})}_{t} - \bar{J}^{(\mathit{final})}_{t+1} \geq \lVert x_{t} \rVert^2_Q +  \lVert u_{t} \rVert^2_R -\bar{\sigma}^2 ,
\end{equation}
Summing over $t$ yields~(\ref{eq:closedloop_cost}) since Theorem~\ref{thm:feasibility} and the boundedness of $\mathcal{X}$ and $\mathcal{U}$ imply
that $\smash{\bar{J}^{(\mathit{final})}_{t}}$ is bounded.
\end{proof}

\def\strut{\rule{0pt}{7pt}}
\begin{table*}[ht]
  \caption{Scaling of computation with problem dimensions}\label{table:computation}
\centerline{\begin{tabular}{@{}l | r r r r r r r r r r @{}}
$(n_x,n_u,n_{\theta})$\strut & $(2,1,2)$ & $(4,2,2)$ & $(4,2,4)$ & $(6,2,4)$ & $(5,2,5)$ & $(6,2,6) $ & $(8,2,8) $ & $(8,4,8) $ & $(10,4,10) $ & $(12,4,12)$ 
\\ \hline
Variables\strut & 48 & 60 & 60 & 62 & 61 & 62 & 64 & 84 & 86 & 88
\\
Equalities \strut & 22 & 44 & 44 & 66 & 55 & 66 & 88 & 88 & 110 & 132
\\
Inequalities\strut & 57 & 97 & 97 & 117 & 107 & 117 & 137 & 177 & 197 & 217
\\
SOC constraints\strut & 294 & 474 & 1274 & 1774 & 2184 & 3454 & 7314 & 7314 & 13334 & 21994
\\
Execution time (\SI{}{\second})\strut & 0.037 & 0.119 & 0.461 & 0.562 & 0.802 & 1.50 & 4.32& 4.23 & 13.17 & 50.18
\end{tabular}}
\end{table*}

\begin{remark}
  A corollary of Theorem~\ref{thm:cost_bound} is that~(\ref{eq:system}) is input-to-state stable (ISS)~\cite{Jiang01} under Algorithm~1. This follows from~(\ref{eq:cost_decrease}), which implies $\bar{J}^{(\mathit{final})}_{t}$ is an ISS-Lyapunov function since $\bar{\sigma}$ is a  $\mathcal{K}$-function of the uncertainty bounds $\max_r\|w^{(r)}\|, d_\Theta, d_{\hat{\Phi}}$.
  The latter follows from the definition of $\bar{\sigma}$ and from (\ref{eq:cost_lmi}), which implies that scaling the disturbance set ($\mathcal{W}\gets \kappa \mathcal{W}$ for $\kappa\in (0,1)$) scales the value of $\sigma$ by the same factor ($\sigma\gets \kappa \sigma$).
\end{remark}

\section{Numerical results} \label{simex}

The proposed controller was tested using randomly generated system models~(\ref{eq:system}) with quadratic nonlinearities:
\[
  f_0(x,u) = Ax + Bu, \ \ f_i(x,u) = e_i [x]_{j_i}^2, \ i\in \mathbb{N}_{n_\theta}
\]
where $A,B$ are randomly chosen matrices, $e_i$ is the $i$th column of the $n_x\times n_x$ identity matrix $I_{n_x}$, and $j_i$ is randomly chosen from $\mathbb{N}_{n_x}$ for each $i$ ($A, B$ and $j_1,\ldots,j_{n_\theta}$ are known to the controller). %
The disturbance set $\mathcal{W}$ belongs to a subspace of dimension $n_w\leq n_x$, and has vertices $w^{(r)} = B_w \hat{w}^{(r)}$, $r\in\mathbb{N}_{\nu_w}$, where $B_w\in\mathbb{R}^{n_x\times n_w}$ is a randomly generated full-column-rank matrix ($B_w$ and $\{\hat{w}^{(r)},\, r\in\mathbb{N}_{\nu_w}\}$ are known).

The true parameter $\theta^\ast$ is randomly chosen and  unknown to the controller, and the initial parameter set estimate is a random simplex $\Theta_0$ containing $\theta^\ast$ with $\max_{\theta\in\Theta_0}\| \theta - \theta^\ast\| \leq 0.05$.
For all $t$, $\Theta_t$ is updated using SME with estimation horizon $N_\Theta = 5$, and the nominal parameter vector $\theta^0_t$ is defined as the mean of the vertices of $\Theta_t$ (for details on SME see \cite{Bue24}).

The state and control sets are $\mathcal{X}= \{x: \| x\|_\infty\leq 10^6\}$, $\mathcal{U} = \{u: \| u \|_\infty \leq 1\}$, and the disturbance set is $\mathcal{W}=\{ B_w \hat{w} : \|\hat{w}\|_\infty \leq 0.01\}$. The cost matrices are $Q = I_{n_x}$, $R = I_{n_u}$, the prediction horizon is $N = 10$, and each simulation runs for $10$ time steps with a randomly chosen (feasible) initial condition.

The offline SDP in Algorithm~\ref{alg:term} is solved using a LDI model representation determined from the vertices of $\hat{\mathcal{X}} = \{x : \|x\|_\infty \leq 1.5 \}$ and $\Theta_0$. The state perturbation constraint set is a simplex: $\mathcal{S} = \{s : [-I_{n_x} \ {\bf 1}]^\top s \leq 0.5\}$, where ${\bf 1} = [1 \ \cdots \ 1]^\top$.
No control perturbation set is needed ($\mathcal{V} = \mathbb{R}^{n_u}$) because the system dynamics are linear in $u$. Since the nonlinear terms in the model are quadratic, the bounds on the error terms $\delta^0$ and $\delta^1$ are determined directly from the vertices of $\Theta_t$ and $\mathcal{S}$.


We apply Algorithm \ref{alg:mpc} with solution $\algtol = 10^{-3}$, using Gurobi~\cite{Gurobi} and Yalmip~\cite{Yalmip} to solve Problem~(\ref{opt:mpc})
as a Second Order Cone Program (Apple M3 Pro, 36 GB memory).

To investigate the computational requirements of the proposed algorithm we consider problems of varying sizes. Table~\ref{table:computation} shows how the numbers of variables and constraints and the time required to solve Problem~(\ref{opt:mpc}) vary with the state and control dimensions $n_x$, $n_u$ and with the dimension  $n_\theta$ of the unknown parameter vector. In each case the disturbance dimension is $n_w=2$, and the computation time in seconds is the mean of $10$ randomly generated problems of a given size.

The dominant factor determining the time needed to solve Problem~(\ref{opt:mpc}) is the number of SOC constraints. This is determined by the number of vertices of the uncertainty set $\mathcal{W}^1$ bounding $\delta^1$, which depends on the number ($n_\theta + 1$) of vertices of $\Theta_t$ and the number of vertices of the perturbation bound $\mathcal{S}$ that contribute to $f_i(x,u)$, $i\in\mathbb{N}_{n_\theta}$ (also $n_\theta + 1$). Hence the number of vertices of $\mathcal{W}^1$ is bounded by $(n_\theta + 1)^2$.
The times reported in Table~\ref{table:computation} show that computation time is proportional to $(n_\theta+1)^{4.2}$ (with $R^2$ value $0.97$).



\section{Conclusion}
This paper introduces a robust nonlinear MPC strategy with online model learning based on ellipsoidal tubes. The method results in a convex optimization problem, which is demonstrated to be efficiently solvable for a class of polynomial system models. The algorithm is recursively feasible and guarantees robust closed-loop stability. The approach scales favourably with the input and state dimensions of the model due to the formulation of Algorithm \ref{alg:mpc} in terms of ellipsoidal tubes.
Some promising future research directions are to use time-varying tube cross sections and local linear feedback gains computed online, and to consider convexification methods using differences of convex functions (as is done in~\cite{Bue24} with polytopic tubes) in the context of ellipsoidal tubes. 

\bibliographystyle{IEEEtran}
\bibliography{ellipsoidal}

\end{document}